\newtheorem{thm}{Theorem}
\newtheorem{lem}[thm]{Lemma}
\newtheorem{prop}[thm]{Proposition}
\theoremstyle{definition}
\newtheorem{defn}[thm]{Definition}
\newtheorem{ex}[thm]{Example}
\newcommand\arxiv[1]{\href{http://arxiv.org/abs/#1}{arXiv:#1}}
\newcommand\cA{\mathcal A}
\newcommand\cD{\mathcal D}
\newcommand\defi[1]{\emph{#1}}
\newcommand\degree{\operatorname{deg}}
\newcommand\isom{\cong}
\newcommand\RR{\mathbb R}
\newcommand\Spec{\operatorname{Spec}}
\newcommand\X{\mathfrak X}
\newcommand\ZZ{\mathbb Z}
\title{Excluded homeomorphism types for dual complexes of surfaces}
\author{Dustin Cartwright}
\address{Department of Mathematics \\ University of Tennessee \\
          227 Ayres Hall \\ Knoxville, TN 37996-1320}
\email{cartwright@utk.edu}
\begin{document}
\begin{abstract}
We study an obstruction to prescribing the dual complex of a strict semistable
degeneration of an algebraic surface. In particular, we show that if $\Delta$
is a complex homeomorphic to a $2$-dimensional manifold with negative
Euler characteristic, then $\Delta$ is not the dual complex of any
semistable degeneration. In fact, our theorem is somewhat more general and
applies to some complexes homotopy equivalent to such a manifold. Our
obstruction is provided by the theory of tropical complexes.
\end{abstract}

\maketitle

The dual complex of a semistable degeneration is a combinatorial encoding of the
combinatorics of the components of the special fiber. In recent years, it has
been studied because of connections to tropical geometry~\cite{helm-katz},
non-Archimedean analytic geometry~\cite{berkovich}, and birational
geometry~\cite{dfkx,brown-foster}. In this paper, we study obstructions to
realizing arbitrary complexes as dual complexes of degenerations of surfaces.

We let $R$ be any rank~$1$ valuation ring with algebraically closed residue
field and we will consider a \defi{degeneration} over~$R$ to be a flat, proper
scheme~$\X$ over $\Spec R$ which is strictly semistable in the sense
of~\cite[Sec.~3]{gubler-rabinoff-werner}. Specifically, we require that $\X$ is
covered by open sets which admit \'etale morphisms over~$R$ to $\Spec R[x_0,
\ldots, x_n] / \langle x_0 \cdots x_m - \pi \rangle$ for some $m \leq n$ and
some $\pi$ in the maximal ideal of~$R$. The dual complex of~$\X$ is a
$\Delta$-complex with one vertex for each irreducible component of the special
fiber and higher-dimensional simplices for each connected component where
irreducible components intersect.

Since semistability implies that the special fiber is normal crossing, the
dimension of the dual
complex is at most the relative dimension of the family~$\X$. In dimension~$1$, any
graph is the dual complex of some degeneration of
curves~\cite[Cor.~B.3]{baker}. However, in this paper, we show that the
analogous statement is not true in dimension~$2$. 
\begin{thm}\label{thm:main}
There is no strict semistable degeneration over a rank~$1$ valuation ring~$R$,
whose general fiber is a smooth, geometrically irreducible surface and such that
the dual complex of the special fiber is homeomorphic to a topological
surface~$\Sigma$ with $\chi(\Sigma) < 0$.
\end{thm}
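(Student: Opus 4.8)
The plan is to argue by contradiction. Suppose $\X$ is such a degeneration, with dual complex $\Delta$ homeomorphic to a closed surface $\Sigma$ with $\chi(\Sigma)<0$. The first step is to record the geometry of the special fiber as a two‑dimensional tropical complex structure on $\Delta$: write $X_v$ for the irreducible components of the special fiber (smooth projective surfaces, by strict semistability), $C_e = X_u\cap X_v$ for the double curve attached to an edge $e=\{u,v\}$ (smooth, for the same reason), and put $\alpha_v(e):=(C_e\cdot C_e)_{X_v}\in\ZZ$. Two facts drive the argument: the triple‑point relation $\alpha_u(e)+\alpha_v(e)+m(e)=0$, where $m(e)$ is the number of triangles of $\Delta$ containing $e$; and, at each vertex $v$, the local Hodge‑index inequality that the intersection matrix $M_v$ of $\{C_e:e\ni v\}$ on $X_v$ — with $(M_v)_{ee}=\alpha_v(e)$ and $(M_v)_{ef}$ equal to the number of triangles containing $v,e,f$ for $e\ne f$ — has at most one positive eigenvalue. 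Since each $X_v$ is projective and the $C_e$ effective, the Hodge index theorem in fact makes $M_v$ negative semidefinite on a hyperplane $\{x:\sum_{e\ni v}w_v(e)x_e=0\}$ with all $w_v(e)>0$; keeping track of such a compatible system of weights will be useful below.

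Now specialize to the manifold case. Because $\Sigma$ is a closed surface, every edge lies in exactly two triangles, so the triple‑point relation reads $\alpha_u(e)+\alpha_v(e)=-2$, and the link of each vertex $v$ is a cycle of length $d_v=\deg v$, so $M_v=A_{d_v}+\operatorname{diag}(\alpha_v(e))_{e\ni v}$, where $A_{d_v}$ is the adjacency matrix of that cycle. The theorem then reduces to the purely combinatorial inequality $\sum_v(6-d_v)\ge0$: for a closed triangulated surface $3F=2E$ and $\sum_v d_v=2E$, whence $\sum_v(6-d_v)=6(V-E+F)=6\chi(\Sigma)$, so $\sum_v(6-d_v)\ge0$ is exactly $\chi(\Sigma)\ge0$, contradicting the hypothesis.

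To prove $\sum_v(6-d_v)\ge0$ I would assemble the local forms into one global quadratic form on the space $\RR^{E}$ spanned by the edges. Summing the $M_v$, extended by zero, and using $\alpha_u(e)+\alpha_v(e)=-2$ together with ``each edge in two triangles'' (after passing to a simplicial triangulation if needed) collapses $\sum_v M_v$ to the explicit form $Q(x)=\sum_\sigma\bigl(\sum_{e\subset\sigma}x_e\bigr)^2-4\sum_e x_e^2$, the sum being over triangles $\sigma$. A sum of $V$ symmetric forms each with at most one positive eigenvalue has at most $V$ positive eigenvalues, so $Q$ does. On the other hand $Q=B^{\top}B-4I$, with $B$ the edge–triangle incidence matrix, so the positive eigenvalues of $Q$ match the eigenvalues exceeding $4$ of $BB^{\top}=3I+A_\Gamma$ — equivalently the eigenvalues exceeding $1$ of the cubic dual graph $\Gamma$ of the triangulation, which by Euler's formula has $F=2(V-\chi)$ vertices. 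The main obstacle lies precisely in pushing this past the naive count: ``$Q$ has at most $V$ positive eigenvalues'' is by itself too weak to force $\chi\ge0$ for a fine triangulation. Closing the gap is where one must re‑use the effectivity of the $C_e$ — the sharpened ``negative semidefinite on a positive hyperplane'' conditions on each $X_v$ localize the positive directions of the $M_v$ and permit a finer eigenvalue comparison — and/or run a discharging argument treating the low‑degree vertices, around which $X_v$ is forced to be close to a del Pezzo surface, separately. The delicate point is that the constant $6$ must come out sharp: it is attained by the torus triangulation with hexagonal vertex stars, which is realized by a toric degeneration whose components are degree‑$6$ del Pezzo surfaces carrying their cycles of $(-1)$‑curves as double curves.

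Finally, the contradiction is extracted only from $\chi(\Delta)<0$ together with the local tropical data and the fact that each edge meets at most two $2$‑faces; so, with appropriate hypotheses, the same estimate applies to complexes merely homotopy equivalent to such a surface, giving the more general statement announced in the abstract.
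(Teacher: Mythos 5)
Your proposal sets up the tropical surface data exactly as the paper does — the structure constants $\alpha_v(e)=(C_e\cdot C_e)_{X_v}$, the triple-point relation $\alpha_u(e)+\alpha_v(e)+\deg(e)=0$, and the Hodge-index constraint that each local intersection matrix $M_v$ has at most one positive eigenvalue — and the reduction ``show $\sum_v(6-d_v)\ge 0$, i.e.\ $\chi\ge 0$'' is indeed equivalent to the theorem in the closed-surface case. The identity $\sum_v M_v = B^\top B - 4I$ is also correct, and the subadditivity of positive inertia gives at most $V$ positive eigenvalues for $Q$. But as you yourself point out, this is the wrong quantity: the number of eigenvalues of the cubic dual graph $\Gamma$ exceeding $1$ grows roughly like a constant fraction of $F=2(V-\chi)$, so ``$\le V$'' does not force $\chi\ge 0$. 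No argument in the proposal actually closes this gap; the suggestions about effectivity of the $C_e$, the auxiliary positive weight vector from the Hodge index theorem, and discharging are programmatic rather than proved, and nothing is said about how to control the case of highly non-uniform structure constants $\alpha_v(e)$ (where a vertex of large degree can have negative-semidefinite $M_v$). So the proposal is a genuinely different route, but at present incomplete at its central step.

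For contrast, the paper does not attempt any global eigenvalue count. It instead works with the sheaf $\cA$ of linear functions on the tropical surface and its quotient $\cD=\cA/\RR$. After passing from a weak tropical surface to a tropical surface by purely combinatorial ``blow-ups'' (Lemma~\ref{lem:combinatorial-blow-up}), and to an orientable double cover so that $\chi\le -2$ and hence $\dim H^1(\Delta,\RR)\ge 4$, it derives the contradiction from a rank computation: Theorem~\ref{thm:differentials} (a consequence of the Hodge index constraint, imported from~\cite{cartwright-surfaces}) forces $H^0(\Delta,\cD)$ to have rank at least $\dim H^1(\Delta,\RR)-1\ge 3$, while Lemma~\ref{lem:restriction} shows that restriction to a single facet embeds $H^0(\Delta,\cD)$ into $\ZZ^2$, so the rank is at most $2$. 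This cohomological route both avoids the spectral difficulty you ran into and extends to the ``manifold with fins and ornaments'' generality (Theorem~\ref{thm:main-refined}), which your argument as stated would not.
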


We conjecture that Theorem~\ref{thm:main} can be strengthened to replace
``homeomorphic'' with ``homotopy equivalent.'' In fact, we can prove a
strengthening in this direction which applies to $\Delta$-complexes formed from a
manifold with negative Euler characteristic by attaching additional simplices
in a controlled way. First, what we call ``fins'' are allowed so long as they
don't change the homotopy type and where the gluing is along a subset that's not
too complicated. Second, arbitrary complexes may be attached to the manifold,
so long as the gluing is along a finite set. These complexes are collectively
the ``ornaments'' in the following definition.

\begin{defn}\label{def:manifold-fins-ornaments}
We say that a $2$-dimensional $\Delta$-complex~$\Delta$ is a \defi{manifold with fins and ornaments} if there exists a subcomplex $\Sigma$, the \defi{manifold}, subcomplexes $F_1, \ldots, F_n$, the \defi{fins}, and a
subcomplex~$O$, the \defi{ornaments}, such that:
\begin{enumerate}
\item We have a decomposition $\Delta = \Sigma \cup F_1 \cup \ldots \cup F_n
\cup O$.
\item $\Sigma$ is homeomorphic to a connected $2$-dimensional topological manifold.
\item For any~$i$, $F_i$ is contractible and $F_i \cap \Sigma$ is a path.
\item For any $i > j$, $F_j \cap F_i$ is a subset of the endpoints of the path
$F_i \cap \Sigma$.
\item The intersection $O \cap (\Sigma \cup F_1 \cup \cdots F_n)$ is a finite set of points.
\end{enumerate}
If the manifold~$\Sigma$ has negative Euler characteristic, we call $\Delta$ a
\defi{hyperbolic manifold with fins and ornaments} and if the subcomplex~$O$ is
empty, then we call $\Delta$ a \defi{manifold with fins}.
\end{defn}

\begin{thm}\label{thm:main-refined}
If $\Delta$ is a hyperbolic manifold with fins and ornaments, then there is no
degeneration with dual complex~$\Delta$.
\end{thm}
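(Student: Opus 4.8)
The plan is to use the tropical complex structure carried by any dual complex. Recall that a strict semistable degeneration $\X/R$ endows its dual complex $\Delta$ with the structure of a $2$-dimensional tropical complex: for each vertex $v$ (a component $C_v$ of the special fiber) and each edge $e \ni v$ there is a structure constant $\alpha_v(e) \in \ZZ$ --- essentially $-(C_v \cap C_w)^2$ computed inside $C_v$, where $w$ is the other endpoint of $e$ --- subject to the triple-point relation that $\alpha_v(e) + \alpha_w(e)$ equals the number of triangles of $\Delta$ on $e$, and to the Hodge-index-type local condition that the symmetric matrix $M_v$ on the set of edges at $v$, with $(M_v)_{e,e} = -\alpha_v(e)$ and $(M_v)_{e,e'}$ the number of triangles at $v$ on both $e$ and $e'$, has at most one positive eigenvalue (this is the Hodge index theorem for the configuration of curves $C_v \cap C_w$ on the surface $C_v$). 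It therefore suffices to prove the combinatorial assertion that no $2$-dimensional tropical complex is a hyperbolic manifold with fins and ornaments.

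I would first settle the case $\Delta = \Sigma$ with $\chi(\Sigma) < 0$. Then every link is a cycle of length $\deg(v)$, every edge lies on exactly two triangles, so $\alpha_v(e) + \alpha_w(e) = 2$ and $M_v = -L(\Delta_v) + \operatorname{diag}\bigl(2 - \alpha_v(e)\bigr)$ with $L$ the Laplacian of the cycle $\Delta_v$. The core is a local lower bound for $\sigma(v) := \sum_{e \ni v} \alpha_v(e)$. If $M_v$ is negative semidefinite, testing against the all-ones vector gives $0 \ge \mathbf 1^{\top} M_v \mathbf 1 = 2\deg(v) - \sigma(v)$, so $\sigma(v) \ge 2\deg(v)$. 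If $M_v$ has one positive eigenvalue and $\sigma(v) < 2\deg(v)$, then $M_v$ is positive on $\mathbf 1$, so the Hodge-index inequality $(\mathbf 1^{\top} M_v \mathbf 1)(y^{\top} M_v y) \le (\mathbf 1^{\top} M_v y)^2$, applied with $y$ the indicator of an arc of the cycle, constrains the partial sums of the integers $2 - \alpha_v(e)$ around the cycle so severely that, after isolating the vertices carrying a very negative structure constant (whose neighbours then carry a correspondingly large one, controlled directly), one again obtains $\sigma(v) \ge 2\deg(v) - 6$. Summing over all vertices and invoking $\sum_v \sigma(v) = \sum_e (\alpha_v(e) + \alpha_w(e)) = 2\#E(\Delta) = \sum_v \deg(v)$ together with $\chi(\Sigma) = \#V - \tfrac13 \#E$ then forces $\chi(\Sigma) \ge 0$, a contradiction. (The constant $6$ is sharp: the torus is realizable as a dual complex, so no bound of this shape can exclude $\chi = 0$.)

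It remains to reduce the general case to this one. The ornaments $O$ meet $\Sigma \cup F_1 \cup \dots \cup F_n$ in a finite set, so outside that set the links are unchanged, and at a point of it $\operatorname{link}_\Delta(v)$ is a disjoint union of $\operatorname{link}_{\Sigma \cup \bigcup F_i}(v)$ with a graph from $O$; then $M_v$ is block diagonal, its first block is a genuine principal submatrix, hence still has at most one positive eigenvalue, and the local analysis applies. For a fin $F_i$ --- contractible, meeting $\Sigma$ in a path --- the link of an interior vertex $v$ of that path contains the cycle $\operatorname{link}_\Sigma(v)$, and the triangles of $F_i$ at $v$ add to the block of $M_v$ on that cycle only entries that are nonnegative and off-diagonal, while possibly raising the number of triangles on the path-edges at $v$ (which only increases $\alpha_v(e) + \alpha_w(e)$ there); one must check that the local estimate of the previous paragraph is robust under these changes. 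Conditions (3)--(4) of Definition~\ref{def:manifold-fins-ornaments} are exactly what is needed here, preventing a fin from collapsing a long link cycle into a shape that evades the estimate. Summing the resulting inequalities over the vertices of $\Sigma$ and using $\chi(\Sigma) < 0$ reproduces the contradiction.

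The step I expect to be the main obstacle is the sharp local estimate --- deriving $\sigma(v) \ge 2\deg(v) - 6$ from ``$M_v$ has at most one positive eigenvalue'' --- since a crude use of arc test vectors only gives a loss of $8$ per vertex, which would yield $\chi(\Sigma) \ge -\#V/3$ rather than $\chi(\Sigma) \ge 0$; closing this gap requires a genuinely quantitative use of the Hodge-index inequality and careful bookkeeping that reassigns the deficit at a vertex with a very negative structure constant to the neighbour forced to carry a very positive one. A second, linked difficulty is the robustness of this estimate under the extra off-diagonal entries and extra triangles that the fins contribute, which is precisely what the hypotheses on fins in Definition~\ref{def:manifold-fins-ornaments} are designed to make tractable.
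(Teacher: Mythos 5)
Your proposal does not follow the paper's argument. The paper never attempts a local eigenvalue estimate; instead it works through sheaf cohomology of linear functions. After constructing a weak tropical surface from the degeneration (Proposition~\ref{prop:degeneration}) and converting it to a genuine tropical surface by combinatorial blow-ups (Lemma~\ref{lem:combinatorial-blow-up}), the paper proves a restriction lemma (Lemma~\ref{lem:restriction}): for a tropical surface which is a manifold with fins, the restriction $H^0(\Delta,\cD)\to H^0(U_s,\cD)\isom\ZZ^2$ is injective, using the maximum modulus principle and Lemma~\ref{lem:all-but-one}. It then invokes Theorem~\ref{thm:differentials} (from \cite{cartwright-surfaces}), which says the image of $H^0(\Delta,\cD)\to H^1(\Delta,\RR)$ has codimension at most~$1$. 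Taking an orientable double cover if necessary so that $\chi\le -2$ forces $\dim H^1\ge 4$, hence $\operatorname{rank} H^0(\Delta,\cD)\ge 3$, which contradicts the embedding into $\ZZ^2$. No vertex-by-vertex inequality is involved.

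There is a genuine gap in the step you yourself flag as the main obstacle. The local estimate $\sigma(v)\ge 2\deg(v)-6$ does \emph{not} follow from the sole hypothesis that $M_v$ has at most one positive eigenvalue with the off-diagonal pattern of a cycle. For example, at a vertex of degree~$4$ take the structure constants so that (in your convention) the diagonal of $M_v$ is $(K,-N,-N,-N)$ with, say, $K=1000$ and $N=3$, and the off-diagonals the adjacency matrix of a $4$-cycle. By Gershgorin, this matrix has exactly one positive eigenvalue, yet $\sigma(v)=-K+3N=-991$, far below $2\deg(v)-6=2$: a single very negative $\alpha_v(e)$ creates an unbounded local deficit while still keeping the signature $(1,d-1)$. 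So any proof along these lines must, at minimum, be a global argument that pairs each such deficit with a surplus at the opposite endpoint of $e$ and still extracts a net inequality -- but once you perform that reassignment, the reassigned quantity is no longer controlled by the eigenvalue condition at~$v$, and the Hodge-index inequality $(\mathbf 1^\top M_v\mathbf 1)(y^\top M_v y)\le(\mathbf 1^\top M_v y)^2$ no longer engages. You offer no way to close this gap, and the analogous ``robustness under fins'' step is similarly unsubstantiated. The structural reductions (strip ornaments via block-diagonality of $M_v$; fins along paths) mirror the paper, but the core estimate on which your whole plan rests is false as stated and unproven as a global bookkeeping claim.
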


The obstruction to having the dual complex of a degeneration be a hyperbolic
manifold with fins and ornaments is in lifting the dual complex to a tropical
complex. A tropical complex is a $\Delta$-complex, together with the
intersection numbers of the $1$-dimensional strata inside the $2$-dimensional
strata, which are called the structure constants of the tropical
complex~\cite{cartwright-complexes}.

\begin{thm}\label{thm:tropical-complexes}
If $\Delta$ is a hyperbolic manifold with fins and ornaments, then there is no
$2$-dimensional tropical complex with $\Delta$ as its underlying topological
space.
\end{thm}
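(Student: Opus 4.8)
Suppose for contradiction that $\Delta$ carries a $2$-dimensional tropical complex structure, with structure constants $\alpha_v^e$ for each edge $e$ and endpoint $v$ of $e$. By \cite{cartwright-complexes} these satisfy the triple-point relation $\alpha_v^e + \alpha_w^e = -t_e$ for every edge $e$ with endpoints $v,w$, where $t_e$ is the number of triangles containing $e$; and for every vertex $v$ the symmetric matrix $M_v$, indexed by the edges at $v$, with $e$-th diagonal entry $\alpha_v^e$ and $(e,e')$-entry the number of triangles containing both $e$ and $e'$, has at most one positive eigenvalue. We must contradict $\chi(\Sigma) < 0$.

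I would first remove the ornaments and the fins. Since $O$ meets $\Sigma \cup F_1 \cup \cdots \cup F_n$ in finitely many points it shares no edge or triangle with it, so the triple-point relations there are untouched, and where an ornament is attached $M_v$ is a block sum whose $\Sigma\cup F_1\cup\cdots\cup F_n$-block inherits the eigenvalue bound; thus we may take $O=\emptyset$. Each fin $F_i$ is then a contractible ``tree of disks,'' attached to $\Sigma$ along a path and to the other fins only at the endpoints of that path, so it can be collapsed one terminal triangle at a time; this mirrors the blow-down of ruled components in the geometric picture, and one checks that such a collapse --- deleting a triangle together with its interior edge and adjusting the structure constants of the surviving edges to preserve the triple-point relations --- modifies each affected $M_v$ without increasing its number of positive eigenvalues. (Alternatively, retain the fins: they merely make the triple-point relations along the attaching paths more negative and impose extra Hodge constraints, which only strengthens the estimate below.) We are reduced to $\Delta = \Sigma$ homeomorphic to a closed surface with $\chi(\Sigma) < 0$, so that every edge lies in exactly two triangles.

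The crux is a global computation on $\Sigma$. Summing the quadratic forms $\xi_v^{\top}M_v\,\xi_v$ over the vertices and using the triple-point relations to rewrite the diagonal contributions as a sum over triangles yields, for any assignment $\xi=(\xi_e)$ of real numbers to the edges,
\[
\sum_v \xi_v^{\top}M_v\,\xi_v \;=\; \sum_{\sigma}\Bigl[\bigl(\textstyle\sum_{e\subset\sigma}\xi_e\bigr)^2 - 2\sum_{e\subset\sigma}\xi_e^2\Bigr],
\]
where $\xi_v$ is the restriction of $\xi$ to the edges at $v$ and $\sigma$ runs over the triangles of $\Sigma$. The local Hodge conditions control the left side: shifting $M_v$ by a large multiple of the identity makes it a non-negative irreducible matrix, the link of $v$ being a cycle, so by Perron--Frobenius its unique positive eigenvalue has a positive eigenvector, and $M_v$ is negative semidefinite on the orthogonal complement of that eigenvector. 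The right side is to be estimated, through a combinatorial analysis of how the edge-weights distribute over the triangles, against the discrete Gauss--Bonnet identity $\sum_v\bigl(1-\tfrac16\deg v\bigr)=\chi(\Sigma)$.

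The step I expect to be the main obstacle is the choice of the test class $\xi$ that makes these two inputs collide, since the naive choice --- orthogonal at each vertex to the positive eigendirection --- only forces both sides to be non-positive, with no contradiction. One must instead exploit that a vertex $v$ of high degree forces, through the Hodge condition, several of the $\alpha_v^e$ to be very negative, hence, through the triple-point relation, several structure constants at its neighbours to be very positive, which propagates still more negativity around $\Sigma$; making this circulation of constraints quantitative and feeding it into the identity, together with the connectedness of $\Sigma$ (so that its $1$-skeleton is non-bipartite and the unsigned vertex--edge incidence map has full rank), is what should produce the contradiction once $\chi(\Sigma)<0$.
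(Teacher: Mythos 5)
Your proposal is not a proof — you say so yourself in the final paragraph — and the missing step is not a technical lemma but the entire mechanism the paper uses. Where you try to sum the local quadratic forms $\xi_v^\top M_v\,\xi_v$ over vertices and play the result against a discrete Gauss--Bonnet identity, the paper instead works with the sheaf $\cA$ of \emph{linear functions} on the tropical surface (Definition~\ref{def:linear}) and its quotient $\cD = \cA/\RR$. The argument is cohomological: Theorem~\ref{thm:differentials}, imported from~\cite{cartwright-surfaces}, says the $\RR$-span of the image of $H^0(\Delta,\cD)\to H^1(\Delta,\RR)$ has codimension at most~$1$; meanwhile Lemma~\ref{lem:restriction} (proved via the tropical maximum-modulus principle, Proposition~\ref{prop:maximum-modulus}, and Lemma~\ref{lem:all-but-one}) shows that for a manifold with fins, restriction to the interior of any facet injects $H^0(\Delta,\cD)$ into $\ZZ^2$. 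After passing to an orientable double cover so that $\chi\le -2$ and hence $\dim H^1(\Delta,\RR)\ge 4$, these two bounds ($\mathrm{rank}\,H^0(\cD)\ge 3$ versus $\mathrm{rank}\,H^0(\cD)\le 2$) contradict one another. The Hodge-index condition on the $M_v$ enters only indirectly, as the hypothesis that makes Proposition~\ref{prop:maximum-modulus} and Theorem~\ref{thm:differentials} available; it is never fed into a pointwise quadratic-form inequality of the kind you are attempting.

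Your quadratic identity
\begin{equation*}
\sum_v \xi_v^{\top}M_v\,\xi_v = \sum_{\sigma}\Bigl[\bigl(\textstyle\sum_{e\subset\sigma}\xi_e\bigr)^2 - 2\sum_{e\subset\sigma}\xi_e^2\Bigr]
\end{equation*}
is correct for a closed triangulated surface, but as you note, choosing $\xi_v$ orthogonal at each vertex to the Perron direction just makes both sides $\le 0$ with no tension against $\chi<0$. I do not see how to complete that route, and the existence of Example~\ref{ex:triangulated-manifold} (all structure constants equal to~$1$) suggests that for a successful purely local estimate you would have to recover the ``at most six edges per vertex'' bound from the eigenvalue condition alone; but for non-constant structure constants the degree bound fails, so the argument cannot be that simple. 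Separately, your reduction by collapsing fins runs in the opposite direction from the paper: Lemma~\ref{lem:combinatorial-blow-up} \emph{attaches} simplices (creating new fins) precisely to turn a weak tropical surface into a tropical surface, and the class of ``manifold with fins'' is designed to be closed under that operation. Deleting a terminal triangle together with its interior edge and readjusting structure constants is not an operation the paper uses, and you would need to verify both that the constraint~(\ref{eq:constraint}) can be restored and that the number of positive eigenvalues of every affected $M_v$ does not increase — neither is obvious, and the paper sidesteps the issue by keeping the fins throughout Lemma~\ref{lem:restriction}.
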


Note that when we construct the tropical complex from a degeneration, we do not
incorporate valuations from the defining equations, in contrast with both the
suggestion from the introduction of~\cite{cartwright-complexes} and the
construction in~\cite[Ex.~3.10]{gubler-rabinoff-werner}, where edges of the dual
complex have lengths coming from the value group of~$R$. We believe that for
many applications, such metric information will be essential, but in this paper,
we intentionally ignore it for the purpose of being able to use the results
from~\cite{cartwright-surfaces}, where the edges implicitly all have length~$1$.
Philosophically, we think of our approach as taking a deformation in the
category of ``metric tropical complexes'' from the complex which encodes the
valuations to a tropical complex with all edges of length~$1$, where we can
apply Theorem~\ref{thm:tropical-complexes}.

There are two characteristics of the special fiber of a degeneration which are
incorporated into the axioms of a tropical complex. The first is that the
special fiber is principal which gives a relationship among the intersection
numbers with a fixed curve. The second is that Hodge index theorem, which
restricts the possible intersection matrices of a fixed surface in the special
fiber.

Both axioms of a tropical complex are necessary in the proof of the obstruction.
Without the condition coming from the Hodge index theorem, the object would only
be a weak tropical complex, and any $\Delta$-complex lifts to a weak tropical
complex. For example, if $\Delta$ is homeomorphic to a topological manifold,
then choosing all structure constants equal to~$1$ gives a weak tropical
complex, but this will not be a tropical complex if $\chi(\Delta) < 0$, as
explained in Example~\ref{ex:triangulated-manifold}.

On the other hand, Koll\'ar has shown that any finite $n$-dimensional simplicial
complex is realizable as the dual complex of a simple normal crossing
divisor~\cite[Thm.~1]{kollar}, but such a divisor would not give a tropical
complex because the divisor is not necessarily principal. However,
when connected, such a divisor can be realized as the exceptional locus of the
resolution of a normal, isolated singularity~\cite[Thm.~2]{kollar}. Thus, we see
Theorem~\ref{thm:main-refined} as an example of how the global geometry of a
smooth algebraic variety is more restricted than the local geometry of a
singularity, in line with~\cite{kapovich-kollar}.

We also note that unlike the cases in Theorem~\ref{thm:main}, topological
surfaces with non-negative Euler characteristic are all possible as
homeomorphism types of degenerations. In particular, the $2$-sphere, the real
projective plane, the torus, and the Klein bottle appear as degenerations of K3
surfaces, Enriques surfaces, Abelian surfaces, and bielliptic surfaces
respectively. In fact, a partial converse is possible in that the dual complexes
of such degenerations have been classified by results of Kulikov, Persson,
Pinkham, and Morrison~\cite{kulikov,persson,persson-pinkham,morrison}. Note that
topological surfaces of non-negative Euler characteristic all arose from
degenerations of varieties of Kodaira dimension~$0$. However, these
classification results would already suffice to prove Theorem~\ref{thm:main} if
we assumed that the general fiber had Kodaira dimension~$0$. 

\section{Tropical complexes and tropical surfaces}

We begin by recalling the definition of tropical complexes, as introduced
in~\cite{cartwright-complexes} and their properties, as studied
in~\cite{cartwright-surfaces}.  Unlike those papers, we additionally assume that
the underlying $\Delta$-complex is regular, meaning that the faces of any fixed
simplex are distinct. All of our combinatorial results also hold without a
regularity assumption, but the dual complex of a strictly semistable
degeneration always is always regular, so that case is sufficient for our
applications. In addition, we will only work with 2-dimensional tropical
complexes, which we will call tropical surfaces. We will refer to the simplices
of dimensions $0$, $1$, and~$2$ in a $2$-dimensional $\Delta$-complex as its
\defi{vertices}, \defi{edges}, and \defi{facets}, respectively.

\begin{defn}
A \defi{weak tropical surface} is a finite, connected, regular $\Delta$-complex
whose cells have dimension at most~$2$, together with integers $\alpha(v,e)$ for
every endpoint~$v$ of an edge~$e$, such that for each edge~$e$, we have an
equality:
\begin{equation}\label{eq:constraint}
\alpha(v, e) + \alpha(w, e) = \degree(e),
\end{equation}
where $v$ and~$w$ are the endpoints of~$e$ and $\degree(e)$ is the number of
$2$-dimensional facets containing~$e$.

At a vertex~$v$ of a weak tropical surface~$\Delta$, the \defi{local
intersection matrix} $M_v$ is a symmetric matrix whose rows and columns are
indexed by edges containing~$v$ and such that the entry corresponding to
edges~$e$ and~$e'$ is:
\begin{equation}\label{eq:local-intersection-matrix}
(M_{v})_{e, e'} = \begin{cases}
\#\{\mbox{facets containing both $e$ and~$e'$}\} & \mbox{if } e \neq e' \\
- \alpha(w, e) & \mbox{if } e = e',
\end{cases}
\end{equation}
where $w$ is the endpoint of~$e$ other than~$v$. A \defi{tropical surface} is a
weak tropical surface~$\Delta$ such that for every vertex~$v$ of~$\Delta$, $M_v$
has exactly one positive eigenvalue.
\end{defn}

\begin{ex}\label{ex:triangulated-manifold}
Let $\Delta$ be a triangulated manifold, meaning a regular $\Delta$-complex
which is homeomorphic to a $2$-dimensional, connected topological manifold.
Then, every edge~$e$ is contained in two facets, so a symmetric choice for the
structure constants satisfying the constraint~(\ref{eq:constraint}) on~$e$ is to
set $\alpha(v,e) = \alpha(w,e) = 1$ for both endpoints $v$ and~$w$. This gives
us a weak tropical surface.

At any vertex~$v$, the link is a cycle and so, for example, if this cycle has
length~5, the local intersection matrix is:
\begin{equation*}
M_v = \begin{pmatrix}
-1 & 1 & 0 & 0 & 1 \\
1 & -1 & 1 & 0 & 0 \\
0 & 1 & -1 & 1 & 0 \\
0 & 0 & 1 & -1 & 1 \\
1 & 0 & 0 & 1 & -1
\end{pmatrix}
\end{equation*}
An $m \times m$ matrix of this form always has a positive eigenvalue of~$1$ for
the eigenvector $(1, \ldots, 1)^T$. However, if $m > 6$, then there exist at
least two other positive eigenvalues. Therefore, $\Delta$ is a tropical complex
if and only if each vertex is contained in at most $6$ edges. A simple counting
argument shows that if each vertex is contained in at most $6$ edges, then the
Euler characteristic $\chi(\Delta)$ is non-negative. Thus, we've verified the
special case of Theorem~\ref{thm:tropical-complexes} that there is no tropical
surface with all structure constants equal to~$1$ for which the underlying
topological space is a hyperbolic manifold.
\end{ex}

In this paper, the main purpose of the structure constants $\alpha(v,e)$ is to
define a sheaf of linear functions on a weak tropical surface~$\Delta$. These
linear functions are the tropical analogue of non-vanishing regular functions in
algebraic geometry.

\begin{defn}[Const.~4.2 and Def.~4.3 in \cite{cartwright-complexes}]
\label{def:linear}
Let $\Delta$ be a weak tropical surface and $e$ an edge of $\Delta$. We define
$N_e$ to be the simplicial complex consisting of~$e$ with a $2$-dimensional
simplex attached for each facet of~$\Delta$ containing~$e$. Then, there exists
a natural map $\pi_e \colon N_e \rightarrow \Delta$ which is an open inclusion
on $N_e^o$, which we define to be the union of interior of~$e$ and of the
interiors of the facets of~$N_e$.

We also define a map $\phi_e \colon N_e \rightarrow \RR^{d+2} / \RR$, where $d$
is the number of facets of~$\Delta$ containing~$e$ and the quotient is by the
line in $\RR^{d+2}$ generated by the vector $(1, \ldots, 1, -\alpha(v, e),
-\alpha(w,e))$. The map $\phi_e$ sends each vertex of $N_e$ to the
image~$\mathbf e_i$ of the $i$th basis vector of $\RR^{d+2}$, with $v$ and $w$
going to $\mathbf e_{d+1}$ and $\mathbf e_{d+2}$ respectively. We then extend
$\phi_e$ linearly to all of $N_e$. The vectors $\mathbf e_1, \ldots, \mathbf
e_{d+2}$ generate a lattice inside $\RR^{d+2} / \RR$ and we say that a function
$\ell \colon \RR^{d+2} / \RR \rightarrow \RR$ has linear slopes if $\ell(\mathbf
e_i) - \ell(\mathbf e_j)$ is an integer for any two vectors $\mathbf e_i$ and
$\mathbf e_j$.

Finally, we say that a continuous function~$\phi$ on an open subset~$U \subset
\Delta$ is \defi{linear} if the following two conditions hold. First, if we
identify the interior of any facet~$f$ meeting~$U$ with a unimodular simplex
in~$\RR^2$, then $\phi$ is an affine linear function with integral slopes on the
interior of~$f$. Second, on any edge $e$ meeting $U$, $\phi \vert_{U \cap
N_{e}^o} = \ell \circ \phi_e$ for some linear function $\ell \colon \RR^{d+2} /
\RR \rightarrow \RR$ with integral slopes. We write $\cA$ for the sheaf of
linear functions on~$\Delta$.
\end{defn}

Note that the image of the map $\phi_e$ in Definition~\ref{def:linear} lies in
the affine linear space consisting of the linear combinations $\sum c_i \mathbf
e_i$ such that $c_1 + \cdots + c_{d+2} = 1$, which is a proper subset of
$\RR^{d+2}/\RR$ because of relation~(\ref{eq:constraint}) in the definition of a
weak tropical surface. Roughly speaking, the quotient in
Definition~\ref{def:linear} means that linearity on a weak tropical surface
imposes one condition beyond linearity on each facet of~$\Delta$. More
precisely, we have the following:

\begin{lem}\label{lem:all-but-one}
Let $e$ be an edge of a tropical surface $\Delta$ and let $N_e^o$ be the union
of the interiors of $e$ and of the facets containing~$e$, as in
Definition~\ref{def:linear}. If $\phi$ is a linear function on $N_e^o$, and is
constant on all but one of these facets, then $\phi$ is constant.
\end{lem}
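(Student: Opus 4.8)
The plan is to unwind Definition~\ref{def:linear} and reduce the assertion to a one-line computation with the auxiliary function $\ell$. Write $v$ and $w$ for the endpoints of $e$ and $f_1, \ldots, f_d$ for the facets containing $e$, so $d = \degree(e)$; I may assume $d \geq 2$, the relevant case (for $d \leq 1$ the hypothesis constrains no facet). By the definition of a linear function, $\phi = \ell \circ \phi_e$ for some $\ell \colon \RR^{d+2}/\RR \to \RR$; set $a_i = \ell(\mathbf e_i)$ for $1 \leq i \leq d$ and $b = \ell(\mathbf e_{d+1})$, $b' = \ell(\mathbf e_{d+2})$. Since $\phi_e$ carries the facet of $N_e$ lying over $f_i$ affinely onto the triangle on the vertices $\mathbf e_i, \mathbf e_{d+1}, \mathbf e_{d+2}$, the restriction of $\phi$ to (the interior of) that facet is constant exactly when $\ell$ agrees at these three vertices, i.e.\ $a_i = b = b'$; and since the image of $\phi_e$ lies in the affine span of $\mathbf e_1, \ldots, \mathbf e_{d+2}$, once $a_1, \ldots, a_d, b, b'$ all coincide the composite $\phi = \ell \circ \phi_e$ is constant on $N_e^o$. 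So it suffices to prove that $a_1, \ldots, a_d, b, b'$ are all equal.

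The one structural input I would use is the relation defining the target of $\phi_e$: by construction the classes of the basis vectors in $\RR^{d+2}/\RR$ satisfy
\begin{equation*}
\mathbf e_1 + \cdots + \mathbf e_d = \alpha(v,e)\,\mathbf e_{d+1} + \alpha(w,e)\,\mathbf e_{d+2},
\end{equation*}
so applying $\ell$ to both sides yields $a_1 + \cdots + a_d = \alpha(v,e)\,b + \alpha(w,e)\,b'$. Evaluating $\ell$ across this relation is legitimate precisely because the two sides are combinations of the $\mathbf e_i$ with the same total coefficient, namely $\alpha(v,e) + \alpha(w,e) = \degree(e) = d$ by the constraint~\eqref{eq:constraint}; this makes the non-homogeneous part of $\ell$ cancel between the two sides, and it is also why the eigenvalue condition in the definition of a tropical surface never enters --- the lemma already holds for weak tropical surfaces.

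To finish, suppose $\phi$ is constant on all but one of the facets, say on $f_1, \ldots, f_{d-1}$. By the first paragraph, $a_i = b = b'$ for $i \leq d - 1$; call this common value $t$ (there is at least one such facet since $d \geq 2$, so in particular $b = b'$). Substituting into the displayed relation gives $(d-1)\,t + a_d = \big(\alpha(v,e) + \alpha(w,e)\big)\,t = d\,t$, hence $a_d = t$ as well. Thus all of $a_1, \ldots, a_d, b, b'$ equal $t$, and therefore $\phi \equiv t$ on $N_e^o$, which is the claim. I do not anticipate a genuine obstacle here: the whole argument is the observation that the single linear relation among the $\mathbf e_i$ imposed by~\eqref{eq:constraint} propagates constancy from $d - 1$ of the facets to the remaining one, and the only point needing care is the legitimacy of evaluating the affine-linear $\ell$ across that relation --- which is exactly where $\alpha(v,e) + \alpha(w,e) = \degree(e)$ is used.
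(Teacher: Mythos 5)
Your proposal is correct and takes essentially the same route as the paper's own proof: both factor $\phi = \ell \circ \phi_e$, invoke the single defining relation among the $\mathbf e_i$ in $\RR^{d+2}/\RR$, and use $\alpha(v,e) + \alpha(w,e) = \degree(e)$ to see that the coefficients let the affine $\ell$ be evaluated across the relation, forcing the last value to agree. You spell out the bookkeeping a bit more explicitly (naming $a_i, b, b'$ and the common value $t$, and flagging the tacit $d \geq 2$), but the underlying argument is identical.
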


\begin{proof}
If $\phi_e$ is as in Definition~\ref{def:linear}, then
$\phi$ factors as $\ell \circ \phi_e$ for some affine linear function~$\ell$.
We also let $d$ and $\mathbf e$ be as in Definition~\ref{def:linear}, and we can
assume that the facets containing $e$ are numbered so that $\phi$ is constant on
all but the first one. Then, we have the linear relation
\begin{equation*}
\mathbf e_1 = -\mathbf e_2 - \cdots - \mathbf e_{d}
+ \alpha(v, e) \mathbf e_{d+1} + \alpha(w, e) \mathbf e_{d+2},
\end{equation*}
where the sum of the coefficients is~$1$ by the relation~(\ref{eq:constraint}),
and so $\ell(\mathbf e_2) = \cdots = \ell(\mathbf e_{d+2})$ implies that
$\ell(\mathbf e_1) = \ell(\mathbf e_2) = \cdots = \ell(\mathbf e_{d+2})$. Therefore, $\ell$ is constant on
$\phi_e(N_e^o)$, which is what we wanted to show.
\end{proof}

\begin{ex}
As in Example~\ref{ex:triangulated-manifold}, suppose we have a triangulated
manifold with $\alpha(v,e) = 1$ for all endpoints $v$ of all edges~$e$. Then,
in Definition~\ref{def:linear}, the interior of~$e$ and of the two facets
containing~$e$ is identified with the interior of a unit square in~$\RR^2$, with
the square triangulated along a diagonal. Linear functions on this neighborhood
of~$e$ are equivalent to affine linear functions on the square. Thus, in this
case, Lemma~\ref{lem:all-but-one} amounts to the observation that affine linear
functions on~$\RR^2$ are determined by their restriction to on any open set.
\end{ex}

Any constant function on a weak tropical surface is linear, so the sheaf
of locally constant $\RR$-values functions is a subsheaf of~$\cA$. If we denote
the quotient sheaf $\cA/ \RR$ by~$\cD$, then we have a long exact sequence in
sheaf cohomology~\cite[Sec.~3]{cartwright-surfaces}:
\begin{equation}\label{eq:long-exact}
0 \rightarrow H^0(\Delta, \RR) \rightarrow H^0(\Delta, \cA) \rightarrow
H^0(\Delta, \cD) \rightarrow H^1(\Delta, \RR)
\rightarrow \cdots
\end{equation}
One of the main results from~\cite{cartwright-surfaces} is the following:
\begin{thm}[Thm.~4.7 in \cite{cartwright-surfaces}]\label{thm:differentials}
If $\Delta$ is a tropical surface which is locally connected through
codimension~$1$, the $\RR$-span of the image of the morphism $H^0(\Delta, \cD)
\rightarrow H^1(\Delta, \RR)$ has codimension at
most~$1$ in $H^1(\Delta, \RR)$.
\end{thm}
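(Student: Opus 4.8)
\emph{Step 1: reduce to a statement about one $\RR$-linear map.} Since $\cD=\cA/\RR$ is a sheaf of finitely generated free abelian groups and $\Delta$ is a finite complex, passing to $\RR$-coefficients costs nothing: let $\cA_\RR$ be the sheaf of linear functions with arbitrary real slopes and $\cD_\RR=\cA_\RR/\RR$, so that $\cD_\RR=\cD\otimes_\ZZ\RR$ and hence $H^0(\Delta,\cD_\RR)=H^0(\Delta,\cD)\otimes_\ZZ\RR$. The sequence $0\to\RR\to\cA\to\cD\to 0$ maps to $0\to\RR\to\cA_\RR\to\cD_\RR\to 0$ by the identity on $\RR$, so by naturality the connecting map $H^0(\Delta,\cD)\to H^1(\Delta,\RR)$ factors through the $\RR$-linear connecting map $\delta_\RR\colon H^0(\Delta,\cD_\RR)\to H^1(\Delta,\RR)$ of the second sequence. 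A short diagram chase then identifies the $\RR$-span of the image in question with $\im\delta_\RR=\ker\bigl(a\colon H^1(\Delta,\RR)\to H^1(\Delta,\cA_\RR)\bigr)$. Thus Theorem~\ref{thm:differentials} is equivalent to the clean assertion $\dim_\RR\im(a)\le 1$.

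\emph{Step 2: localize at the vertices.} I would compute $a$ through the \v{C}ech--Mayer--Vietoris spectral sequence for the cover of $\Delta$ by the open stars $\operatorname{St}(v)$ of its vertices. Each star, each pairwise overlap (a neighborhood of an open edge), and each triple overlap (a single open facet, by regularity) is contractible, so the $E_1$-page is built from the local groups $H^*(\operatorname{St}(v),\cA_\RR)$ and their restrictions to the overlapping edges. Definition~\ref{def:linear} is already a local-at-an-edge construction, and Lemma~\ref{lem:all-but-one} isolates the single linear condition that linearity across an edge imposes beyond linearity on the incident facets; assembling these over $\operatorname{St}(v)$, the obstruction to realizing a prescribed tuple of slopes along the edges at $v$ by a linear function on $\operatorname{St}(v)$ is encoded by the local intersection matrix $M_v$. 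I expect $H^1(\operatorname{St}(v),\cA_\RR)$ to be computed by $M_v$ up to edge bookkeeping, and the map $a$ to factor through the $M_v$ glued along the edges of $\Delta$.

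\emph{Step 3: the Hodge-index input.} The defining condition of a tropical surface --- every $M_v$ has exactly one positive eigenvalue --- is the Hodge-index ingredient. I would look to repackage the glued local data as a symmetric bilinear pairing on $H^1(\Delta,\RR)$, which under the hypothesis ``locally connected through codimension~$1$'' should be the cup product composed with a well-defined functional on $H^2(\Delta,\RR)$, and, via a Poincar\'e--Lefschetz-type comparison of $H^1(\Delta,\cA_\RR)$ with $H^1(\Delta,\cD_\RR)^{\vee}$, identify $\im(a)$ with the failure of that pairing to be perfect. The one-positive-eigenvalue hypothesis then bounds this failure by dimension at most~$1$: a two-dimensional ``extra'' subspace would have to appear in some $H^1(\operatorname{St}(v),\cA_\RR)$ and would violate the signature of $M_v$ there, whereas the single dimension that is permitted should be the expected one --- it carries the length data on the edges that we have deliberately suppressed, i.e.\ the ``principality of the special fiber'' relation from the introduction.

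\emph{The main obstacle} is exactly Step~3: upgrading the local, finite-dimensional fact ``each $M_v$ has one positive eigenvalue'' to the global bound $\dim_\RR\im(a)\le 1$. Classically this is the Hodge index theorem for the general fiber, proved by Hodge theory on a smooth projective surface; here only the combinatorial data is at hand, so the argument must run through the \v{C}ech complex of the star cover and keep track of how the single positive direction at each vertex interacts when stars overlap along edges. The hypothesis ``locally connected through codimension~$1$'' is what prevents the gluing from creating spurious cohomology that would inflate the defect, and establishing that the defect is at most --- not more than --- one dimension is the delicate point; the regularity of $\Delta$ serves only to simplify the local bookkeeping.
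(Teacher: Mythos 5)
First, note that this paper does not prove Theorem~\ref{thm:differentials} at all: it is imported verbatim as Thm.~4.7 of \cite{cartwright-surfaces}, so there is no internal argument to compare yours against, and your sketch has to stand on its own. Judged that way, it has a genuine gap, and it is exactly where you flag it: Step~3. Reducing to $\dim_\RR \im\bigl(H^1(\Delta,\RR)\to H^1(\Delta,\cA_\RR)\bigr)\le 1$ (Step~1) is reasonable, modulo the unproved identification $H^0(\Delta,\cD_\RR)\cong H^0(\Delta,\cD)\otimes_\ZZ\RR$, and localizing at vertex stars (Step~2) is a plausible framework. But the decisive implication --- from ``each $M_v$ has exactly one positive eigenvalue'' to a \emph{global} defect of dimension at most one --- is only described as an expectation, and the one heuristic you offer for it is not valid: a two-dimensional global defect does \emph{not} have to ``appear in some $H^1(\operatorname{St}(v),\cA_\RR)$'' and violate the signature of a single $M_v$. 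Each vertex separately permits one positive direction, so a priori the contributions from different vertices could be one-dimensional each and independent, giving a defect that grows with the number of vertices. The whole content of the theorem is the mechanism that forces the permitted positive direction at every vertex to be one and the same global direction; this is where the hypothesis ``locally connected through codimension~$1$'' must actually be used (in the spirit of the maximum principle, Proposition~\ref{prop:maximum-modulus}, or a Perron--Frobenius-type positivity statement gluing positive eigenvectors across edges), and your outline supplies no such mechanism, nor any candidate pairing on $H^1(\Delta,\RR)$ beyond the hope that one exists.

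Two smaller points. Your bookkeeping for the star cover is wrong as stated: regularity of the $\Delta$-complex means the faces of each simplex are distinct, not that two vertices span at most one edge or that three vertices span at most one facet, so pairwise and triple overlaps of stars can be disconnected unions of several edge- and facet-neighborhoods; the spectral sequence still applies, but the ``single open facet'' simplification is unavailable. And the aside identifying the one allowed dimension of defect with suppressed edge lengths or with ``principality of the special fiber'' is speculation that plays no role in the argument; if you want to close the proof you should instead consult the actual proof of Thm.~4.7 in \cite{cartwright-surfaces}, since the present paper deliberately treats it as a black box.
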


\noindent In Theorem~\ref{thm:differentials}, \defi{locally connected
through codimension~$1$} means that the link of each vertex is connected.

If linear functions on weak tropical surfaces are taken to be analogous to
non-vanishing regular functions, then the analogues of rational functions on an
algebraic variety come from relaxing
the linearity condition in codimension~1. More precisely, suppose that $d_1,
\ldots, d_m$ are closed line segments, each in one facet of a weak
tropical surface~$\Delta$, such that $d_i \cap d_j$ is finite
for distinct $i$ and~$j$. Then we say a function $\phi$ on an open subset $U
\subset \Delta$ is a \defi{piecewise linear function whose divisor is supported
in $d_1 \cup \cdots \cup d_m$} if $\phi$ is continuous and $\phi$ is linear on
$U \setminus (d_1 \cup \cdots \cup d_m)$. Although we will not need it in this
paper, we can justify our terminology with:
\begin{prop}[Prop.~4.5 in \cite{cartwright-complexes}]\label{prop:divisor}
Let $d_1, \ldots, d_m$ be line segments in a weak tropical surface~$\Delta$ as
above. If $U$ is an open set meeting all of the $d_i$, then there exists a
homomorphism from the group of piecewise linear functions~$\phi$ whose divisor
is supported on $d_1 \cup \cdots \cup d_m$ under addition to formal sums of the
$d_i$, known as the divisor of~$\phi$. The maximal open subset of~$U$ on which
such a function~$\phi$ is linear is the complement of those $d_i$ with non-zero
coefficient in the divisor of~$\phi$.
\end{prop}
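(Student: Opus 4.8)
The plan is to define $\operatorname{div}(\phi)$ coefficient by coefficient, where the coefficient of $d_i$ records the ``bend'' of $\phi$ transverse to $d_i$. Fix $i$ and let $f_i$ be the facet containing $d_i$, with its interior $f_i^\circ$ identified with a unimodular simplex in $\RR^2$ as in Definition~\ref{def:linear}. Let $L_i \subset \RR^2$ be the affine line spanned by $d_i$, and fix once and for all a primitive integral affine functional $\lambda_i$ vanishing on $L_i$ together with a choice of which open half-plane is ``positive.'' Since $d_i \cap d_j$ is finite for $j \neq i$, the relative interior of $d_i$ contains points $p$ lying in $f_i^\circ$ and on no other $d_j$; near such a $p$ the set $U \setminus (d_1 \cup \cdots \cup d_m)$ is a disk cut along the chord $L_i$, so $\phi$, being linear off the $d_j$, restricts to affine functions $\phi^+$ and $\phi^-$ on the two halves. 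By continuity $\phi^+$ and $\phi^-$ agree on $L_i$, so $\phi^+ - \phi^- = c\lambda_i$ for a unique real $c$; because $\phi^\pm$ have integral slopes and $\lambda_i$ is primitive integral, $c$ is an integer (forced to be $0$ when $L_i$ has irrational slope). I set $c_i(\phi) := c$ and $\operatorname{div}(\phi) := \sum_i c_i(\phi)\,d_i$, an element of the free abelian group on $d_1, \ldots, d_m$.

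The first thing to check is that $c_i(\phi)$ is independent of $p$, which is the heart of the argument. The function $p \mapsto c_i(\phi)$ is locally constant on the relative interior of $d_i$ away from the finite set of points of $\partial f_i$ and of the $d_i \cap d_j$, since on each resulting sub-arc the affine pieces $\phi^\pm$ are fixed. To cross a point $q$ in $d_i \cap d_j$ I use that $\phi$ is single-valued: near $q$ the complement of $d_i \cup d_j$ has at most four sectors on which $\phi$ is affine, and going once around $q$ the affine piece returns to itself; each crossing of $d_i$ changes it by $\pm c_i\lambda_i$ and each crossing of $d_j$ by $\pm c_j\lambda_j$, with sign according to direction, so if the supporting lines of $d_i$ and $d_j$ are distinct (the collinear case degenerates to a single line and is immediate) the relation forces the $d_i$-bend, and separately the $d_j$-bend, to agree on the two sub-arcs meeting at $q$. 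Hence $c_i(\phi)$ is well defined; changing the normalization $\lambda_i$ or the positive side only flips its sign, which is immaterial. The homomorphism property is then immediate, since $(\phi + \psi)^\pm = \phi^\pm + \psi^\pm$ near each $p$, so $c_i(\phi + \psi) = c_i(\phi) + c_i(\psi)$, and the piecewise linear functions with divisor supported on $\bigcup d_i$ form a group under addition.

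For the last assertion, write $W$ for the maximal open subset of $U$ on which $\phi$ is linear and $Z = \bigcup \{\, d_i : c_i(\phi) \neq 0 \,\}$. To see $U \setminus Z \subseteq W$: at a point of $U \setminus Z$ off every $d_j$, $\phi$ is linear by hypothesis; at a point on some $d_i$ with $c_i(\phi) = 0$ and off every $d_j$ with $c_j(\phi) \neq 0$, the pieces $\phi^\pm$ coincide, so $\phi$ is affine on a neighborhood, and where several such $d_i$ meet, all the affine pieces in all sectors coincide; a routine check against Definition~\ref{def:linear} handles a point lying on an edge of $\Delta$. Conversely, if $c_i(\phi) \neq 0$, then near a relative-interior point $p$ of $d_i$ in $f_i^\circ$ and off the other $d_j$ we have $\phi^+ - \phi^- = c_i(\phi)\lambda_i \neq 0$, so $\phi$ is not affine on any neighborhood of $p$, hence not linear there; since such points $p$ accumulate at every point of $d_i$, no point of $d_i$ lies in $W$, giving $W \subseteq U \setminus Z$. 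Thus $W = U \setminus Z$.

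I expect the main obstacle to be exactly this well-definedness step — promoting the bend to a single integer attached to the whole segment $d_i$ rather than to its sub-arcs — together with the degenerate configurations that the clean ``four sectors around $q$'' picture does not cover: a $d_i$ overlapping an edge of $\Delta$, several segments concurrent at one point, or collinear segments. In each of these the argument again has to rest on $\phi$ being single-valued and on the sheaf-theoretic meaning of linearity near an edge of $\Delta$ from Definition~\ref{def:linear}.
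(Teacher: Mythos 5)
You cannot be compared against the paper's own argument here, because the paper contains none: Proposition~\ref{prop:divisor} is quoted from Prop.~4.5 of \cite{cartwright-complexes}, and the surrounding text explicitly notes it is not needed for the results of this paper. So your proposal has to stand on its own. Its core is the natural one — defining the coefficient of $d_i$ as the bend $\phi^+ - \phi^- = c_i\lambda_i$ at a generic relative-interior point, with $\lambda_i$ a primitive integral functional vanishing on the supporting line — and your integrality argument, the homomorphism property, and the monodromy argument at a transverse crossing of exactly two segments are all correct.

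However, the cases you defer in your last paragraph are not routine, and one of them is a genuine gap. If three or more supporting lines pass through a common point $q$ interior to the segments, the functionals $\lambda_i, \lambda_j, \lambda_k, \ldots$ vanishing at $q$ are linearly dependent (the space of such functionals is two-dimensional), so the relation obtained by going once around $q$ does not force the bend along $d_i$ to agree on its two sides. It can genuinely fail: inside one facet take $d_1, d_2, d_3$ to be short closed segments through a common interior point along the directions of the $x$-axis, the $y$-axis and the line $x=y$, and $\phi = \min(0,x,y)$ in those coordinates. This $\phi$ is continuous and linear off $d_1 \cup d_2 \cup d_3$, yet it bends along only one half of each segment; your $c_i(\phi)$ is then not well defined, and moreover the maximal open set of linearity is not the complement of any union of whole segments, so the second assertion of the proposition cannot hold as literally stated for this configuration. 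Hence this case cannot be absorbed by the same monodromy trick; one must either identify the positioning hypotheses carried by the original Prop.~4.5 of \cite{cartwright-complexes} or change the construction, and your proof does not do either. Separately, a segment $d_i$ lying inside an edge $e$ of $\Delta$ (permitted by ``closed line segments, each in one facet'') admits no description of the bend as a difference of two affine pieces: there the coefficient has to be extracted from the failure of the condition $\phi = \ell\circ\phi_e$ on $N_e^o$, i.e.\ it involves the structure constants $\alpha(v,e)$, and your proposal gives no construction in that case. Finally, note that your local-constancy and monodromy steps tacitly assume $U$ contains neighborhoods of the relevant points of $d_i$, whereas the hypothesis is only that $U$ meets each $d_i$; as written, well-definedness can also fail for a disconnected $U$. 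Until these points are addressed, the proof is incomplete precisely where you flagged uncertainty.
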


Finally, we have an analogue of the maximum modulus principle from complex
analysis. The result in \cite{cartwright-surfaces} also applies to piecewise
linear functions whose divisor has non-negative coefficients, but we'll
only need it for linear functions, i.e.\ when the divisor is trivial.

\begin{prop}[Prop.~2.11 in \cite{cartwright-surfaces}]
\label{prop:maximum-modulus}
Let $\Delta$ be a tropical surface which is connected through codimension~$1$.
If $\phi$ is a linear function on a connected open set~$U$ which achieves its
maximum on~$U$, then $\phi$ is constant.
\end{prop}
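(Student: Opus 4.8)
The plan is to show that the locus $S=\{x\in U:\phi(x)=\sup_U\phi\}$, which is nonempty by hypothesis and closed in $U$ by continuity of $\phi$, is also open in $U$; since $U$ is connected this forces $S=U$, i.e.\ $\phi$ is constant. Openness is a local statement, so fix $p\in S$, normalize so that $\phi(p)=0$, and note that $\phi\le 0$ on a neighborhood of $p$. I would argue by cases on the dimension of the open simplex whose closure contains $p$ in its interior. If $p$ is interior to a facet $f$, then by Definition~\ref{def:linear} $\phi|_f$ is affine linear in suitable coordinates, an affine function on an open convex set with a local maximum is constant, and a neighborhood of $p$ is contained in $f$; so that neighborhood lies in $S$.

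Next suppose $p$ is interior to an edge $e$ with facets $g_1,\dots,g_d$ ($d=\degree(e)$). A neighborhood of $p$ lies in $N_e^o$, and there $\phi=\ell\circ\phi_e$ for some affine linear $\ell$ on $\RR^{d+2}/\RR$, with $\ell\le 0$ on $\phi_e(N_e^o)$ and $\ell$ vanishing at the point $\phi_e(p)$, which lies in the relative interior of the segment $[\mathbf e_{d+1},\mathbf e_{d+2}]=\phi_e(e)$. Restricting $\ell$ to that segment shows $\ell(\mathbf e_{d+1})=\ell(\mathbf e_{d+2})=0$, and restricting $\ell$ to each image $\phi_e(g_m)$ (spanned by $\mathbf e_m,\mathbf e_{d+1},\mathbf e_{d+2}$) shows $\ell(\mathbf e_m)\le 0$. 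Now apply $\ell$ to the relation $\sum_{m=1}^d\mathbf e_m=\alpha(v,e)\mathbf e_{d+1}+\alpha(w,e)\mathbf e_{d+2}$; because the coefficients on the two sides both sum to $\degree(e)$ by~(\ref{eq:constraint}), this is an identity of affine combinations and yields $\sum_m\ell(\mathbf e_m)=0$, forcing $\ell(\mathbf e_m)=0$ for every $m$. Hence $\ell$ vanishes on all of $\mathbf e_1,\dots,\mathbf e_{d+2}$, so on the affine span of $\phi_e(N_e^o)$, and $\phi$ is constant on a neighborhood of $p$. (This is the same mechanism as the proof of Lemma~\ref{lem:all-but-one}.)

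The main case is $p=v$ a vertex. For each edge $e$ at $v$ let $s_e$ be the outward slope of $\phi$ along $e$ at $v$; since $\phi$ is affine along $e$ near $v$ with a local maximum at $v$, we have $s_e\le 0$, and I set $\mathbf s=(s_e)_{e\ni v}$. The first step is to show $M_v\mathbf s=0$: applying the edge condition $\phi|_{N_e^o}=\ell\circ\phi_e$ at an edge $e=\overline{vw}$ and feeding the relation $\sum_m\mathbf e_m=\alpha(v,e)\mathbf e_{d+1}+\alpha(w,e)\mathbf e_{d+2}$ into the affine function $\ell$ as above, one gets $\sum_{e'}\#\{\text{facets containing both }e,e'\}\,s_{e'}=\alpha(w,e)\,s_e$, which by~(\ref{eq:local-intersection-matrix}) is exactly the vanishing of the $e$-th coordinate of $M_v\mathbf s$. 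The second step is a Perron--Frobenius argument. The off-diagonal entries of $M_v$ are the nonnegative facet counts, so $M_v+cI$ is entrywise nonnegative for $c\gg 0$, and it is irreducible because the link of $v$ is connected (this is where the hypothesis that $\Delta$ is connected through codimension~$1$ enters); hence $M_v+cI$ has a strictly positive eigenvector $u$ for its spectral radius, and $u$ is then an eigenvector of $M_v$ for its largest eigenvalue, which is the unique positive eigenvalue of $M_v$ because $\Delta$ is a tropical surface. Since $\mathbf s\in\ker M_v$ is orthogonal to the positive eigenline $\RR u$ while $\mathbf s\le 0$ componentwise and $u>0$, the equality $\langle\mathbf s,u\rangle=0$ forces $\mathbf s=0$. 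Then $\phi$ is affine on each facet at $v$ with value $0$ at $v$ and slope $0$ along both of its edges at $v$, so $\phi$ vanishes near $v$; that neighborhood lies in $S$, completing the proof that $S$ is open.

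I expect the vertex case to be the main obstacle. Two things need care there: the bookkeeping that converts the definition of a linear function into the clean identity $M_v\mathbf s=0$ (keeping track of the quotient $\RR^{d+2}/\RR$ and the structure constants), and the Perron--Frobenius step, which needs $M_v$ to be irreducible, i.e.\ the link of $v$ to be connected — this is precisely what the hypothesis that $\Delta$ is connected through codimension~$1$ provides. Without irreducibility one could only conclude that $\mathbf s$ vanishes on the component of the link of $v$ that carries the positive eigenvalue, and a separate argument would be needed to rule out nonconstant behavior on the negative-semidefinite components.
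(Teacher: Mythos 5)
The paper does not prove this proposition; it quotes it verbatim as Prop.~2.11 of \cite{cartwright-surfaces}, so there is no in-text proof to compare against. Assessed on its own, your argument is correct and complete: the connectedness scheme (the superlevel set at the maximum is nonempty, closed, and open), the facet and edge cases, the derivation of $M_v\mathbf s=0$ from the edge-linearity conditions at every edge through~$v$, and the Perron--Frobenius step using exactly the two hypotheses (one positive eigenvalue of $M_v$ from the tropical surface axiom, irreducibility of $M_v+cI$ from connectedness of the link of $v$) all go through. You have also correctly identified the role of ``connected through codimension~$1$'': it is what supplies irreducibility, and without it the kernel vector $\mathbf s$ could live entirely in a negative-semidefinite block of~$M_v$.

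Two small points of hygiene. First, in the identity you write as $\sum_{e'}\#\{\text{facets containing both }e,e'\}\,s_{e'}=\alpha(w,e)\,s_e$, the sum must be over $e'\neq e$; with $e'=e$ included the term would contribute $\degree(e)\,s_e$ rather than the diagonal entry $-\alpha(w,e)$ of~$M_v$, and the claimed equivalence to $(M_v\mathbf s)_e=0$ would fail. Second, the slope $s_{e'}$ should be read as the one-sided derivative of $\phi$ along $e'$ at $v$ (equivalently $\ell(\mathbf e_m)-\ell(\mathbf e_{d+1})$), not as a difference of values at the two endpoints of $e'$, since the far endpoint of $e'$ need not lie in the open set~$U$; with that reading everything you wrote is valid.
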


\section{Degenerations}

In this section, we construct weak tropical surfaces from strictly semistable
degenerations. The case of regular semistable degenerations over discrete
valuation rings was treated in~\cite[Sec.~2]{cartwright-complexes}, but here we
want to work over possibly non-discrete valuation rings. The data of a weak
tropical surface only depends on the special fiber as a simple normal crossing
scheme over the residue field, and not on the valuation ring, so we can use the
same construction as~\cite[Sec.~2]{cartwright-complexes}, which we now recall.

We let $\X$ be a strictly semistable degeneration over~$R$ and, as stated in the
introduction, the dual complex~$\Delta$ has one $k$-dimensional simplex for each
stratum of dimension $2-k$ in the special fiber of~$\X$. Thus, if $e$
is an edge with endpoints $v$ and~$w$, then we let $C_e$ and $C_w$ denote the
curve and surface corresponding to $e$ and~$w$ respectively. We set $\alpha(v,e)
= - C_e^2$, where $C_e^2$ denotes the self-intersection number of~$C_e$ in
$C_w$. Even for regular strictly semistable degenerations over discrete
valuation rings, this data may only give a weak tropical surface without an
additional technical condition of robustness in dimension~2
\cite[Prop.~2.7]{cartwright-complexes}. Over non-discrete valuation rings, we
also get weak tropical surfaces

\begin{prop}\label{prop:degeneration}
The special fiber of any degeneration~$\X$ yields a weak tropical
surface~$\Delta$ such that the local intersection matrix~$M_v$ has at most one
positive eigenvalue for each vertex~$v$ of~$\Delta$.
\end{prop}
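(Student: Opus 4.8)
The plan is to recognise the local intersection matrix $M_v$ as the Gram matrix of the double curves lying on the component $S_v$ of the special fiber that corresponds to $v$, taken with respect to the intersection pairing on $S_v$; the defining identity of a weak tropical surface then comes from the triple point formula, and the eigenvalue bound from the Hodge index theorem.

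First I would recall the local structure. Since $\X$ has relative dimension $2$, the special fiber $\X_0$ is a reduced simple normal crossing surface over the algebraically closed residue field $k$, \'etale-locally of the form $\Spec k[x_0,x_1,x_2]/\langle x_0\cdots x_m\rangle$ with $m \le 2$. Write $S_v$ for the surface attached to a vertex $v$ and $C_e \subseteq S_v \cap S_w$ for the smooth connected curve attached to an edge $e$ with endpoints $v$ and $w$. The local model shows that each $S_v$ is smooth, that distinct curves $C_e$ and $C_{e'}$ through $S_v$ meet transversally and only at triple points of $\X_0$, and that the triple points lying on a fixed $C_e$ correspond bijectively to the facets of $\Delta$ containing $e$, so that $\degree(e)$ is the number of triple points on $C_e$. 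Being a smooth proper surface over a field, each $S_v$ is moreover projective; that $\Delta$ is finite, connected, and regular is clear from strict semistability.

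Next I would check that $\alpha(v,e) = -C_e^2$ (self-intersection in $S_w$) defines a weak tropical surface. The one nontrivial point is the equality $\alpha(v,e)+\alpha(w,e) = \degree(e)$, which unwinds to the triple point formula $(C_e^2)_{S_v} + (C_e^2)_{S_w} = -\#\{\text{triple points on }C_e\}$. I would deduce this from two ingredients: that the special fiber is principal, $\mathcal{O}_\X(\X_0) \isom \mathcal{O}_\X$ — which holds because in the local model $\pi$ is literally the product $x_0\cdots x_m$, cutting out exactly the reduced special fiber, a statement insensitive to whether $R$ is discrete — and the standard normal-bundle identifications along $C_e$ read off from the local model; together these give $N_{S_v/\X} \isom \mathcal{O}_{S_v}(-\sum_{e \ni v}C_e)$ and, upon computing $\det N_{C_e/\X}$ through $S_v$ and through $S_w$, the formula. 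This is the computation of \cite[Prop.~2.7]{cartwright-complexes}, now run over a possibly non-discrete valuation ring.

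Finally I would bound the eigenvalues of $M_v$. By \eqref{eq:local-intersection-matrix} and the bijection above, $(M_v)_{e,e} = -\alpha(w,e) = (C_e^2)_{S_v}$ for the other endpoint $w$, while for $e \ne e'$ the entry $(M_v)_{e,e'}$ counts the triple points lying on both $C_e$ and $C_{e'}$, hence equals the intersection number $(C_e \cdot C_{e'})_{S_v}$ by transversality. Thus $M_v$ is exactly the Gram matrix of the classes $[C_e]$, for $e$ incident to $v$, inside the real vector space of divisors on $S_v$ modulo numerical equivalence, with respect to the intersection pairing. By the Hodge index theorem for the smooth projective surface $S_v$, that pairing has exactly one positive eigenvalue, so its restriction to any subspace — in particular to the span of the $[C_e]$ — has positive index of inertia at most $1$; and the Gram matrix of a spanning set of a subspace has the same number of positive eigenvalues as the pairing restricted to that subspace, since passing from a basis to a spanning set only enlarges the kernel and creates no new positive eigenvalues. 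Hence $M_v$ has at most one positive eigenvalue, at every vertex $v$. The main obstacle in this plan is the triple point formula over a non-discrete valuation ring, where the total space need not even be Noetherian; the way around it is that the formula is forced by principality of $\X_0$ and by the \'etale-local model, neither of which depends on $R$ being discrete.
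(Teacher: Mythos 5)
Your argument is correct and matches the paper's in all essentials: both derive the constraint $\alpha(v,e)+\alpha(w,e)=\degree(e)$ from principality of the special fiber and then identify $M_v$ as the Gram matrix of the double curves on $S_v$ so that the Hodge index theorem bounds the positive inertia by one. The only visible difference is in the first step: you route the triple-point formula through normal-bundle identities ($N_{S_v/\X}\isom\mathcal O_{S_v}(-\sum_{e\ni v}C_e)$, then compute $\det N_{C_e/\X}$ two ways), while the paper decomposes the degree-zero intersection $\pi\cdot C_e$ directly into contributions from each component of $\X_0$, citing Gubler's intersection theory over (possibly non-Noetherian) valuation rings to make those intersection numbers and the projection formula precise. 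These are the same computation in two dressings, but your remark at the end that ``the formula is forced by principality $\ldots$ neither of which depends on $R$ being discrete'' is a gesture toward, rather than a substitute for, the actual mechanism that makes the computation rigorous outside the Noetherian setting; the paper fills that hole by working in Gubler's framework~\cite{gubler}. Your expanded justification of the Gram-matrix/spanning-set step (that passing from a basis of the span to the full spanning family only enlarges the kernel, preserving the positive index) is correct and is something the paper leaves implicit.
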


\begin{proof}
For $\Delta$ to be a weak tropical surface, we need to check that for any
edge~$e$ with endpoints $v$ and~$w$, we have the equality (\ref{eq:constraint}):
\begin{equation}\label{eq:constraint-prop}
\alpha(v,e) + \alpha(w, e) = \degree e.
\end{equation}
Let $C_e$ be the curve corresponding to~$e$ in the special fiber of~$\X$, and 
by our semistability condition, on a Zariski open neighborhood meeting
$C_e$, there is an
\'etale map to $\Spec R[x,y,z]/\langle x y - \pi \rangle$ for some element~$\pi$
in the maximal ideal of~$R$. Then, the principal Cartier divisor defined by
$\pi$ can be written, at least in a formal open neighborhood of~$C_e$, as the union of
Cartier divisors, each of which is supported on an irreducible component of the
special fiber of~$\X$. For example, in the above chart, the functions~$x$
and~$y$ pull back to give defining equations for each of the components
containing~$C_e$.

Thus, using linearity of the intersection product~\cite[Prop.~5.9(b)]{gubler},
we can split up the intersection of the principal divisor defined by~$\pi$ with
the curve $C_e$ into terms coming from the components of the special fiber
of~$\X$. For components of the special fiber which don't contain~$C_e$, if we
pull back to~$C_e$ we get a Cartier divisor equal to the points of intersection,
with multiplicities equal to~$1$. Thus, the degree of the intersection of such a
Cartier divisor with~$C_e$ is equal to the number of points of intersection by
the projection formula~\cite[Prop.~5.9(c)]{gubler}. The total degree for all
components which don't contain~$C_e$ gives $\deg e$, which is the right-hand
side of~(\ref{eq:constraint-prop}).

Now consider the two components~$C_v$ and~$C_w$ containing~$C_e$. If we pull
back the Cartier divisor supported on~$C_v$ to $C_w$ then we get the
divisor~$C_e$ on~$C_w$. The self-intersection of~$C_e$ is $-\alpha(v, e)$ by the
definition of the structure constants. Thus, using the projection formula again,
the components containing~$C_e$ contribute a cycle of degree equal to
$-\alpha(v, e) - \alpha(w, e)$, so the desired
equality~(\ref{eq:constraint-prop}) follows because $\pi$ obviously defines a
principal divisor.

Finally, we claim that the local intersection matrix~$M_v$ records the
intersection theory on the surface of the special fiber corresponding to~$v$,
restricted to curves of the special fiber. For the diagonal entries of the local
intersection matrix~(\ref{eq:local-intersection-matrix}), this follows
immediately from our definition of the structure constants. The off-diagonal
entries of the local intersection matrix count facets containing two edges~$e$
and~$e'$, which are in bijection with the number of reduced points in the
intersection of corresponding curves $C_e$ and~$C_{e'}$, and thus equal to the
intersection number $C_e \cdot C_{e'}$. Therefore, by the Hodge index theorem,
the local intersection matrix~$M_v$ can have at most one positive eigenvalue.
\end{proof}

One approach to obtaining a tropical surface instead of a weak tropical surface
is Proposition~2.10 in \cite{cartwright-complexes}, which shows that for
degenerations with projective components, robustness can be obtained by
appropriate blow-ups. Rather than adapting this proposition to the case of
non-discrete valuations, while also keeping track of the effect on the
underlying topological space, it is more convenient to perform the modification
combinatorially:

\begin{lem}\label{lem:combinatorial-blow-up}
Let $\Delta$ be a weak tropical surface and suppose that for
each vertex~$v$ of~$\Delta$, the local intersection matrix~$M_v$ has at most one
positive eigenvalue. Then, there exists a
tropical surface~$\Delta'$ such that the underlying topological space of
$\Delta'$ is formed by attaching a finite number of $2$-simplices to edges
of~$\Delta$.
\end{lem}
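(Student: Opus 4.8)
Call a vertex $v$ of a weak tropical surface \emph{good} if $M_v$ has exactly one positive eigenvalue and \emph{bad} if $M_v$ is negative semidefinite; by hypothesis every vertex of $\Delta$ is good or bad, and a weak tropical surface is a tropical surface precisely when it has no bad vertices. The plan is to eliminate bad vertices one at a time by repeatedly \emph{attaching a triangle to an edge}: given an edge $e$ with endpoints $v$ and $w$, this move adjoins a new vertex $u$, two new edges $vu$ and $wu$, and one new facet $vwu$. The key claim will be that if $v$ is bad, the move can be carried out, with a suitable choice of the new structure constants, so that afterwards $v$, $w$, and $u$ are all good while every other local intersection matrix is unchanged. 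Granting this, the lemma follows by iterating over the originally bad vertices: each move strictly decreases the number of bad vertices --- it makes $v$ good, it leaves $w$ good or makes it good, and the new vertex $u$ is good --- so after finitely many moves one reaches a tropical surface $\Delta'$, whose underlying space is that of $\Delta$ with finitely many triangular flaps glued along edges. That $\Delta'$ remains a finite, connected, regular $\Delta$-complex of dimension at most $2$ satisfying~\eqref{eq:constraint} is routine bookkeeping.

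The first step is to record the effect of the move on the local intersection matrices. Since $\degree(e)$ increases by one, \eqref{eq:constraint} forces a redistribution on $e$: we may change $(M_v)_{e,e}$ by any integer $\delta$, and then $(M_w)_{e,e}$ changes by $-1-\delta$. The two new edges have degree one, so the structure constant on each carries one free integer parameter. Consequently $M_x$ is unchanged for $x\notin\{v,w,u\}$; the new matrix $M_u$ is the $2\times 2$ matrix $\left(\begin{smallmatrix}-a-1 & 1\\ 1 & -b-1\end{smallmatrix}\right)$, where $a$ and $b$ are corner entries we may prescribe; and the new $M_v$ (resp.\ $M_w$) is obtained from $M_v$ with $(M_v)_{e,e}$ increased by $\delta$ (resp.\ $M_w$ with $(M_w)_{e,e}$ decreased by $1+\delta$) by adjoining one row and column that equals $1$ in the coordinate indexed by $e$, is $0$ in the other off-diagonal positions, and has diagonal entry $a$ (resp.\ $b$).

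Making $v$ good will rest on the following fact in linear algebra: bordering a negative semidefinite symmetric matrix by a standard coordinate vector together with a sufficiently large positive corner entry produces a matrix with exactly one positive eigenvalue. Eigenvalue interlacing gives ``at most one,'' and for ``at least one'' one either produces an indefinite $2\times 2$ sub-form (when the coordinate vector lies outside the column space) or passes to the Schur complement (when it lies inside). Applying this to $M_v$ with $(M_v)_{e,e}$ left unchanged or decreased --- so that it remains negative semidefinite --- and with $a$ large and positive makes $v$ good, while $M_u$ is made good simply by arranging $(a+1)(b+1)\le 0$, so that $\det M_u<0$.

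The delicate point --- and the step I expect to be the main obstacle --- is keeping $w$ good, that is, preventing the border from promoting $M_w$ from one positive eigenvalue to two. This genuinely can fail: if $M_w$ is singular and the coordinate vector of $e$ does not lie in its column space, then bordering $M_w$ by that coordinate vector and any corner $b$ produces two positive eigenvalues, so the move has to be set up according to a dichotomy. If $M_w$ is nonsingular, or singular with the coordinate vector of $e$ in its column space, take $\delta=-1$ so that $M_w$ itself is unchanged, and choose $b$ negative enough --- the threshold coming from the Schur complement of the bordered matrix --- to keep $M_w$ at one positive eigenvalue while keeping $a$ large and positive for $v$, which is consistent with $(a+1)(b+1)\le 0$. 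Otherwise, take $\delta$ large and positive, so that $-1-\delta\neq 0$: then the coordinate vector of $e$ does lie in the column space of the modified $M_w$, and that matrix has the same number of positive eigenvalues --- zero or one --- as the submatrix of $M_w$ obtained by deleting the row and column indexed by $e$; a case-appropriate choice $b\in\{0,-1\}$ together with a very negative corner $a$ then makes $M_v$, $M_w$, and $M_u$ all good simultaneously. Checking that these choices are mutually consistent --- the three new matrices interact only through the two linked pairs of diagonal entries coming from the degree-one edges $vu$ and $wu$ --- finishes the proof.
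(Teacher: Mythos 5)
Your proposal attacks the lemma via the same elementary move as the paper~--- attach a triangle along an edge $e$ with endpoints $v,w$, adjoining a new vertex $u$ and two degree-one edges, and choose the new structure constants so that $v$ and $u$ become good while the eigenvalue count at $w$ does not worsen~--- and your bookkeeping for how the move changes the three local intersection matrices is correct. The real difference is in the parameter choice. You treat $\delta$, $a$, $b$ as tunable and split into cases according to whether $M_w$ is singular and whether the $e$-coordinate vector lies in its column space; this is sound, but it is more complicated than necessary, and the asymptotic hedges (``$b$ negative enough,'' ``very negative corner $a$'') would need explicit, mutually consistent thresholds before being airtight. The paper instead makes a single universal choice~--- in your notation $\delta = 0$, $a = 1$, $b = -1$~--- calibrated so that the congruence $P M_{w'} P^T$, with $P$ the operation adding the $e_w'$ row and column into the $e$ row and column, equals the block-diagonal matrix $M_w \oplus (-1)$ \emph{exactly}: the decrease of $1$ in $(M_{w'})_{e,e}$, the two off-diagonal $1$'s, and the corner $-1$ sum to zero, restoring $(M_w)_{e,e}$. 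Likewise $N M_{v'} N^T$, with $N$ the operation subtracting the $e_v'$ row and column from the $e$ row and column, is exactly the direct sum of a $1\times 1$ block $(1)$ with $M_v$ having its $(e,e)$-entry decreased by one, which is still negative semidefinite. Because the reduction is a literal row-and-column elimination rather than a Schur complement, deflation, or interlacing argument, it never needs to invert $M_w$ or know where the $e$-coordinate vector points; the dichotomy you flag as the ``main obstacle'' simply never arises. Your instinct about where the difficulty would lie is reasonable, but the paper's calibration makes it a non-issue, and you should also correct the summary sentence ``it leaves $w$ good or makes it good'' to what you actually prove, namely that the move preserves (in your first case) or does not worsen (in your second case) the number of positive eigenvalues at $w$.
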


\begin{proof}
We suppose that $v$ is a vertex of~$\Delta$ such that $M_v$ has no positive
eigenvalues, i.e.\ it is negative semidefinite. Let $e$ be an edge
containing~$v$ and let $w$ be
the other endpoint of~$e$. We attach an additional $2$-simplex onto $e$ and
label the new vertex $u'$, with the new edges $e_v'$ and $e_w'$. We use $v'$,
$w'$ and $e'$ to denote the representatives of $v$, $w$, and $e$ in the new weak
tropical surface $\Delta'$. We assign the coefficients on~$\Delta'$ to be the
same as on~$\Delta$, except that:
\begin{align*}
&\alpha(w', e') = \alpha(w, e) &
&\alpha(v', e') = \alpha(v, e) + 1 \\
&\alpha(w', e_w') = 0 &
&\alpha(u', e_w') = 1 \\
&\alpha(v', e_v') = 2 &
&\alpha(u', e_v') = -1
\end{align*}
Then we claim the local intersection matrices $M_{u'}$ and $M_{v'}$ each have
exactly one positive eigenvalue and that the number of positive eigenvalues
of~$M_{w'}$ is the same as that of~$M_w$. Once we show the claim, then we can
repeat the above construction at each vertex~$v$ whose local intersection
matrix~$M_v$ is negative semidefinite to get the desired tropical surface.

The first part of the claim is that the local intersection matrices
\begin{equation*}
M_{u'} = \begin{pmatrix}
0 & 1 \\
1 & -2
\end{pmatrix}
\mbox{ and }
M_{v'} = \begin{pmatrix}
* & \cdots & * & 0 \\
\vdots & & \vdots & \vdots \\
* & \cdots & -\alpha(w, e)  & 1 \\
0 & \cdots & 1 &  1 
\end{pmatrix}
\end{equation*}
have exactly one positive eigenvalue each, where $*$ indicates the parts of
$M_{v'}$ that coincide with~$M_v$. For $M_{u'}$, there is exactly one positive
eigenvalue because
it has determinant $-1$. For $M_{v'}$, we use the change of  coordinates:
\begin{equation*}
NM_{v'}N^T = \begin{pmatrix}
* & \cdots  & * & 0 \\
\vdots & & \vdots & \vdots \\
* & \cdots & -\alpha(w, e) - 1 & 0 \\
0 & \cdots & 0 & 1
\end{pmatrix}
\mbox{ where }
N = \begin{pmatrix}
1 & \cdots  & 0 & 0 \\
\vdots & \ddots & \vdots & \vdots \\
0 & \cdots & 1 & -1 \\
0 & \cdots & 0 & 1
\end{pmatrix}
\end{equation*}
to get a block diagonal matrix whose upper left block is $M_v$ minus a negative
semidefinite diagonal matrix, and is thus negative semidefinite. Moreover, the
lower right block of $N M_{v'} N^T$ is a single positive entry, so $N M_{v'}
N^T$ has exactly one positive eigenvalue, as in the first part of the claim.

For the second part of our claim, we want to show that
\begin{equation*}
M_{w'} = \begin{pmatrix}
* & \cdots & * & 0 \\
\vdots & & \vdots & \vdots \\
* & \cdots & -\alpha(v, e) - 1 & 1 \\
0 & \cdots & 1 & -1
\end{pmatrix}
\end{equation*}
has the same number of positive eigenvalues as $M_w$, where $*$ denotes parts of
the matrix which coincide with $M_w$. We again use a change of coordiates to a
block diagonal matrix:
\begin{equation*}
P M_{w'} P^T = \begin{pmatrix}
* & \cdots & * & 0 \\
\vdots & & \vdots & \vdots \\
* & \cdots & -\alpha(v, e) & 0 \\
0 & \cdots & 0 & -1
\end{pmatrix}
\mbox{ where }
P = \begin{pmatrix}
1 & \cdots & 0 & 0 \\
\vdots & \ddots & \vdots & \vdots \\
0 & \cdots & 1 & 1 \\
0 & \cdots & 0 & 1
\end{pmatrix}.
\end{equation*}
The upper left block of $P M_{w'} P^T$ agrees with $M_w$ and the lower right
block adds a single negative eigenvalue. Thus, $M_{w'}$ has the same number of
positive eigenvalues as $M_w$, which completes the proof of the claim.
\end{proof}

\section{Proof of the main theorems}

The crux of Theorem~\ref{thm:tropical-complexes} and thus of
Theorem~\ref{thm:main-refined} is the following lemma:

\begin{lem}\label{lem:restriction}
Let $\Delta$ be a tropical surface whose underlying $\Delta$-complex is a
manifold with fins and is connected through codimension~$1$. If $s$ is a facet
contained in the manifold subcomplex of~$\Delta$, and $U_s$ denotes the interior
of~$s$, then the restriction map
\begin{equation*}
H^0(\Delta, \cD) \rightarrow H^0(U_s, \cD) \isom \ZZ^2
\end{equation*}
is injective.
\end{lem}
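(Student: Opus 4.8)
The plan is to show that a nonzero element of $H^0(\Delta, \cD)$ restricting to zero on $U_s$ leads to a contradiction via the maximum modulus principle (Proposition~\ref{prop:maximum-modulus}). An element of $H^0(\Delta, \cD)$ that dies on $U_s$ is, by the exact sequence~(\ref{eq:long-exact}) applied locally, represented by a collection of linear functions on the facets that glue to a genuine linear function $\phi \in H^0(\Delta \setminus (\text{something}), \cA)$ after removing the facet $s$; more precisely, I would first observe that, since $H^0(U_s, \cD) \isom H^0(U_s, \cA)/\RR \isom \ZZ^2$ records the two slopes of a linear function on $s$, an element of the kernel can be represented by a section $\phi$ of $\cA$ on $\Delta^o \setminus \overline{U_s}$ (or on a slightly smaller open set avoiding the boundary of $s$) — equivalently, $\phi$ is linear on every facet except possibly $s$, where it is constant. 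The key point is that linearity at each edge of $s$, together with Lemma~\ref{lem:all-but-one}, forces $\phi$ to be constant on the facets adjacent to $s$ as well: $s$ is one of exactly two facets on each of its edges (since $s$ lies in the manifold part $\Sigma$), and $\phi$ is constant on the other one too once we know it is "constant on $s$," so Lemma~\ref{lem:all-but-one} propagates constancy.

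From there the strategy is a propagation argument through the manifold $\Sigma$. Having shown $\phi$ is constant on every facet meeting a neighborhood of $s$, I would like to conclude $\phi$ is constant on a neighborhood of all of $\Sigma$, and then on all of $\Delta$. The cleanest route is: let $V$ be the union of interiors of facets and edges on which $\phi$ is (locally) constant; $V$ is open, and I claim $V \supseteq \Sigma^o$. If not, there is an edge $e$ of $\Sigma$ with one adjacent facet in $V$ and the other not — but in the manifold $\Sigma$, $e$ has exactly two facets, and Lemma~\ref{lem:all-but-one} says that if $\phi$ is linear near $e$ and constant on one of the two facets at $e$, it is constant on the other. (Fins attached along paths in $\Sigma$ add more facets to some edges, but those edges still have their two $\Sigma$-facets, and constancy on all but one of the facets at $e$ — which we get once the argument has swept through — still forces constancy; I should be careful here that fins don't block the sweep, using that each fin meets $\Sigma$ in a path and is contractible, so a fin is reached from $\Sigma$ and cannot obstruct reaching another facet of $\Sigma$.) By connectedness of $\Sigma$, $\phi$ is constant on all of $\Sigma^o$, and since $\Delta$ is connected through codimension~$1$ and a manifold with fins, the same edge-by-edge propagation (now using Lemma~\ref{lem:all-but-one} in full strength) extends constancy across the fins to all of $\Delta$.

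Finally, once $\phi$ is constant on $\Delta \setminus \overline{U_s}$, I would invoke Proposition~\ref{prop:maximum-modulus}: the original section of $\cD$ is represented by $\phi$ together with the prescribed linear function on $s$, and since $\phi$ is constant while this section is in the kernel of restriction to $U_s$, the linear function on $s$ differs from a constant by something whose slopes are zero — i.e., it is constant — so the whole $\cD$-section is zero. (Alternatively, extend the constant $\phi$ over $s$, note the extension is a global linear function achieving its max, hence constant, forcing the $\cD$-class to vanish.) The main obstacle I anticipate is the bookkeeping around the boundary of $s$ and the presence of fins: I need the kernel element to be genuinely representable by a section of $\cA$ off $\overline{U_s}$ (not just off $U_s$), which requires checking that linearity at the three edges of $s$ is automatic from the $\cD$-class data, and I need the propagation through edges that lie on both $\Sigma$ and a fin to be valid — this is where Lemma~\ref{lem:all-but-one} with $d \geq 3$ is essential and where conditions (3)–(4) of Definition~\ref{def:manifold-fins-ornaments} do the work.
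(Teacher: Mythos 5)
Your proposal has the right general flavor (a propagation/sweep argument starting from $U_s$, using Lemma~\ref{lem:all-but-one} at interior edges of $\Sigma$, and invoking the maximum modulus principle), but it has two genuine gaps, the second of which is the real one.

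First, you open by representing the kernel element by ``a genuine linear function $\phi \in H^0(\Delta \setminus (\text{something}), \cA)$,'' i.e., a global lift of the $\cD$-section to $\cA$ off $\overline{U_s}$. This does not exist in general: the obstruction is exactly the image of the section in $H^1(\Delta, \RR)$, which need not vanish, and removing one triangle from $\Sigma$ does not kill $H^1$. The paper avoids this by never lifting globally; it works directly with the $\cD$-section $\omega$, enlarging the open set $V$ on which $\omega$ vanishes and using local lifts only on simply connected patches such as $N_e^o$. This part of your argument could be repaired by reformulating it in terms of $\omega$ rather than $\phi$.

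The more serious gap is in your treatment of the fins. Your sweep through $\Sigma$ works across an edge $e$ only when $e$ has exactly two facets, so it works unobstructed only on edges of $\Sigma$ that lie in no fin. At an edge $e \subset \Sigma \cap F_i$ there are $2 + k$ facets, where $k \geq 1$ is the number of facets of $F_i$ along $e$ (and $k$ can be $\geq 2$; think of a ``book'' of triangles glued to $\Sigma$ along a single edge). Knowing $\omega$ vanishes on one, or even both, $\Sigma$-facets at $e$ is not ``all but one,'' so Lemma~\ref{lem:all-but-one} does not fire. You acknowledge this and say you get constancy ``once the argument has swept through,'' but there is no mechanism to sweep through the fin: the fin contains no edges where Lemma~\ref{lem:all-but-one} applies until you already know $\omega$ vanishes there. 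Moreover, the paths $\cup_i (\Sigma \cap F_i)$ can share endpoints and hence form a graph that separates $\Sigma$ (e.g., two arcs with common endpoints forming a separating loop), so you cannot even be sure to reach both $\Sigma$-facets at $e$ by going around. The paper's proof handles exactly this point with a separate argument: it introduces the sheaf $\widetilde\cA_i$ of piecewise linear functions whose divisor is supported on $\Sigma \cap F_i$, uses the contractibility of $F_i$ to lift a $\widetilde\cD_i$-section supported on $F_i$, and applies the maximum modulus principle (Proposition~\ref{prop:maximum-modulus}) on $F_i \setminus \Sigma$ to conclude $\omega$ vanishes on the whole fin -- only after which Lemma~\ref{lem:all-but-one} is used at $\Sigma \cap F_i$ to cross to the remaining $\Sigma$-facet. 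Your proposal invokes the maximum modulus principle only at the end and for a different (and in fact unnecessary) purpose, so this key ingredient is missing.
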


\begin{proof}
Note that the isomorphism $H^0(U_s, \cD) \isom \ZZ^2$ holds because affine
linear functions on $U_s$ are equivalent to affine linear functions with
integral slopes on a unimodular simplex in $\RR^2$ by definition. Thus, $\cA$
restricted to $U_s$ is isomorphic to the locally constant sheaf with values in
$\RR \times \ZZ^2$, and the quotient sheaf~$\cD = \cA / \RR$ is isomorphic to
$\ZZ^2$.

Now, we let $\Sigma$ and $F_1, \ldots, F_n$ denote the manifold and fins of the
simplicial complex, as in Definition~\ref{def:manifold-fins-ornaments}. We
suppose $\omega$ a global section of~$\cD$ such that the restriction of~$\omega$
to $U_s$ is trivial, and we want to show that $\omega$ is trivial. We start with
$V = U_s$ and then we'll expand the open set~$V$ until it is all of~$\Delta$. At
each step, $V$ will either be disjoint from each fin $F_i$ or contain~$F_i$,
except possibly the endpoints of $F_i \cap \Sigma$. In particular, the
boundary of~$V$ will be contained in~$\Sigma$.

First suppose that there exists an edge~$e$ in the boundary of~$V$ such that $e$
is not contained in any of the fins. Let $f$ denote the $2$-simplex
bordering~$e$ whose interior is in~$V$ and let $f'$ denote the $2$-simplex on
the other side of~$e$. Then, Definition~\ref{def:linear} 
identifies the union of the interiors of $f$,
$f'$, and~$e$ with an open subset of~$\RR^2$, and the sections of $\cA$ are
exactly affine linear functions with integral slope on this set. As above, $\cA$
and~$\cD$ are therefore locally constant sheaves with values in $\RR \times
\ZZ^2$ and $\ZZ^2$ respectively. Thus, we can expand~$V$ to include the
interiors of $e$ and~$f'$, where $\omega$ is also zero.

Second, we assume that every edge in the boundary of~$V$ is contained in some
$\Sigma \cap F_i$. Let $i$ be the maximal index such that $F_i$ intersects the
boundary of~$V$. Then, the entire path $\Sigma \cap F_i$ must be in the boundary
of~$V$ or else there would be a fin $F_j$ with $j < i$ intersecting $F_i$ not at
its endpoint, which would contradict
Definition~\ref{def:manifold-fins-ornaments}. In particular, $\omega$ must
vanish along $\Sigma \cap F_i$.

Let $\widetilde\cA_i$ be the sheaf of piecewise linear functions on~$\Delta$
whose divisors are supported on $\Sigma \cap F_i$. If we let $\widetilde\cD_i$
denote the quotient sheaf $\widetilde\cA_i / \RR$, then we can give a global
section~$\widetilde\omega_i$ of $\widetilde\cD_i$ defined piecewise such
that $\widetilde \omega_i$ vanishes on $\Delta \setminus F_i$ and it agrees
with $\omega$ on $F_i \setminus \Sigma$. This defines a valid section
of~$\widetilde \cD_i$ because, as we noted, $\omega$ vanishes along $\Sigma \cap
F_i$, and $\widetilde\omega_i$ is clearly a section of $\cD$ away from $\Sigma
\cap F_i$, on which we only require continuity. Consider the long exact sequence
of cohomology associated to the quotient~$\widetilde\cD_i$, analogous
to~(\ref{eq:long-exact}):
\begin{equation*}
0 \rightarrow H^0(\Delta, \RR) \rightarrow
H^0(\Delta, \widetilde\cA_i) \rightarrow
H^0(\Delta, \widetilde\cD_i) \rightarrow H^1(\Delta, \RR) \rightarrow
\end{equation*}
Since $\widetilde\omega_i$ is only non-trivial on~$F_i$,
which is contractible, the image of $\widetilde\omega_i$ in $H^1(\Delta, \RR)$
is trivial, so $\widetilde\omega_i$ lifts to an element of $H^0(\Delta,
\widetilde\cA_i)$, which we also denote by $\widetilde\omega_i$ and we choose
the representative such that $\widetilde\omega_i$ is zero on~$\Sigma$.

If $\widetilde\omega_i$ is non-constant, then it must have a maximum value
strictly greater than zero or minimum value strictly less than zero. Then, it
would have its maximum or minimum, respectively, on $F_i \setminus \Sigma$. We
apply Proposition~\ref{prop:maximum-modulus} to the linear function
$\widetilde\omega_i\vert_{F_i \setminus \Sigma}$ or to its negative to show that
$\widetilde\omega_i\vert_{F_i \setminus \Sigma}$ must be constant, and
so $\widetilde\omega_i$ is zero everywhere. Thus, $\omega$ is identically zero
on~$F_i$, and so we can expand~$V$ to include $F_i \setminus \Sigma$.

We've now shown that for each edge~$e$ of $\Sigma \cap F_i$, the
section~$\omega$ is zero on all but one simplex containing~$e$, namely the
simplex in~$\Sigma$ on the other side from~$V$. As in
Definition~\ref{def:linear}, we let $N_e^o$ denote the union interior of~$e$
with the interiors of the facets containing~$e$. Since $N_e^o$ is simply
connected, we can lift $\omega \vert_{N_e^o}$ to a linear function~$\phi$ on
$N_e^o$. Then, by Lemma~\ref{lem:all-but-one}, $\phi$ is constant on $N_e^o$,
and so $\omega$ vanishes on $N_e^o$. Therefore, we can further expand~$V$ to
also include the interiors of all $2$-simplices meeting $F_i$ and the path
$\Sigma \cap F_i$.

At the end, we will have that $\omega$ is zero on an open set~$V$ which contains
the interior of every $2$-simplex in~$\Delta$ and since affine linear functions
are continuous by definition, this means that $\omega$ is zero, which finishes
the proof of the lemma.
\end{proof}

We use Lemma~\ref{lem:restriction} to prove the following strengthening of
Theorem~\ref{thm:tropical-complexes}.

\begin{thm}\label{thm:weak-tropical-complexes}
If $\Delta$ is a hyperbolic manifold with fins and ornaments, then there is no
weak tropical surface, with $\Delta$ as its underlying topological space, and
such that for every vertex~$v$ of~$\Delta$, $M_v$ has at most one positive
eigenvalue.
\end{thm}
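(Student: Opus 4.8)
The plan is to derive a contradiction from the existence of such a weak tropical surface by first reducing to the case of an actual tropical surface, then combining the restriction result of Lemma~\ref{lem:restriction} with the cohomological constraint of Theorem~\ref{thm:differentials} and a Euler characteristic count. First I would apply Lemma~\ref{lem:combinatorial-blow-up} to replace $\Delta$ by a genuine tropical surface $\Delta'$ whose underlying space is obtained from $\Delta$ by attaching finitely many $2$-simplices along edges. The key observation is that $\Delta'$ is again a hyperbolic manifold with fins and ornaments: each newly attached $2$-simplex, glued along a single edge, is contractible and meets the rest of the complex in a path, so it can be absorbed into the fins (or, if it is glued to an ornament or a fin endpoint, the structure of Definition~\ref{def:manifold-fins-ornaments} is preserved after possibly reindexing). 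In particular the manifold subcomplex $\Sigma$ is unchanged and still has $\chi(\Sigma) < 0$. I also need $\Delta'$ to be connected through codimension~$1$; since $\Delta$ is a weak tropical surface it is connected, and the manifold-with-fins-and-ornaments structure forces the link of each vertex to be connected (the manifold links are cycles, fins attach along paths, ornaments attach at isolated points), so this holds, and it is preserved under the blow-ups.

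Next I would set up the cohomological contradiction. Discard the ornaments: write $\Delta_0 = \Sigma \cup F_1 \cup \cdots \cup F_n$, the manifold-with-fins part of $\Delta'$. Since the ornaments meet $\Delta_0$ in a finite set of points, $H^1(\Delta', \RR) \to H^1(\Delta_0, \RR)$ is surjective (the ornament pieces only contribute extra $H^0$ and, via Mayer--Vietoris, an $H^1$ summand mapping isomorphically, but they cannot kill classes coming from $\Delta_0$), and more to the point the restriction of a tropical surface to $\Delta_0$ is again a tropical surface to which Lemma~\ref{lem:restriction} applies. The core of the argument: Theorem~\ref{thm:differentials} says the image of $H^0(\Delta', \cD) \to H^1(\Delta', \RR)$ has codimension at most~$1$. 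On the other hand, Lemma~\ref{lem:restriction} says that for a facet $s$ in the manifold part, the composite $H^0(\Delta', \cD) \to H^0(U_s, \cD) \cong \ZZ^2$ is injective, so $\dim_\RR H^0(\Delta', \cD) \le 2$ after tensoring with $\RR$ — wait, more carefully, $H^0(\Delta', \cD)$ maps injectively into $\RR^2$ after base change, hence has dimension at most~$2$. Then from the long exact sequence~(\ref{eq:long-exact}) the image of $H^0(\Delta', \cD) \to H^1(\Delta', \RR)$ has dimension at most~$2$. Combining, $\dim H^1(\Delta', \RR) \le 3$.

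Finally I would contradict this by computing $b_1(\Delta')$. Since $\Delta' = \Sigma \cup F_1 \cup \cdots \cup F_n \cup O$ where each fin $F_i$ is contractible and glued along a path (contractible) to $\Sigma \cup F_1 \cup \cdots \cup F_{i-1}$, and each ornament component is glued along a finite set, Mayer--Vietoris (or a Euler characteristic count) gives $b_1(\Delta') \ge b_1(\Sigma)$; attaching the fins along paths does not decrease $b_1$, and attaching the ornaments along finite point sets can only increase it. For a connected closed surface $\Sigma$ with $\chi(\Sigma) < 0$ we have $b_1(\Sigma) = 2 - \chi(\Sigma) \ge 3$ in the orientable case and $b_1(\Sigma) = 1 - \chi(\Sigma) \ge 2$ in the non-orientable case over $\RR$ — here I should be careful to use $\RR$ coefficients throughout; a non-orientable closed surface has $b_1 = 1 - \chi$, and $\chi < 0$ forces $\chi \le -1$, giving $b_1 \ge 2$, which is not yet a contradiction with $b_1 \le 3$. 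The main obstacle is therefore sharpening the bound: I expect one must improve the dimension count, using that $H^0(\Delta', \cD)$ contains the $2$-dimensional space of differentials only if suitable linear functions extend globally, which the maximum modulus principle (Proposition~\ref{prop:maximum-modulus}) and connectedness through codimension~$1$ obstruct — in fact on a tropical surface built from a closed manifold $\Sigma$, $H^0(\Delta', \cA)$ is just the constants, so from~(\ref{eq:long-exact}) $H^0(\Delta', \cD)$ injects into $H^1(\Delta', \RR)$, and Lemma~\ref{lem:restriction} then bounds $\dim H^1(\Delta', \RR) \le 2 + 1 = 3$ is still the bound; the non-orientable case $\chi = -1$ (e.g. connected sum of three projective planes has $\chi = -1$, $b_1 = 2$) must be excluded by the additional codimension-one slack in Theorem~\ref{thm:differentials} being genuinely needed, i.e. by showing the image actually has codimension exactly the contribution of a single facet and that the $\ZZ^2$ is not fully hit. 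I would resolve this by noting that on the manifold part the two independent differentials on $U_s$ cannot both extend — one of them, extended, would be a nonconstant linear function on the closed surface $\Sigma$ achieving a max, contradicting Proposition~\ref{prop:maximum-modulus} — so the restriction map in Lemma~\ref{lem:restriction} has image of rank at most~$1$ after all the relevant identifications, giving $\dim H^1(\Delta', \RR) \le 1 + 1 = 2 < b_1(\Sigma)$ in every case with $\chi(\Sigma) < 0$, the desired contradiction.
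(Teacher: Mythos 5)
Your proposal correctly assembles the key ingredients—Lemma~\ref{lem:combinatorial-blow-up}, Lemma~\ref{lem:restriction}, Theorem~\ref{thm:differentials}, and an Euler characteristic count—and, to your credit, you honestly diagnose the exact place where the naive count fails: when $\Sigma$ is non-orientable with $\chi(\Sigma) = -1$, one gets $\dim_\RR H^1 = 1 - \chi = 2$, which is not contradicted by the bound $\dim_\RR H^1 \le 3$ coming from $\operatorname{rk} H^0(\cD) \le 2$ plus Theorem~\ref{thm:differentials}. However, the fix you propose is incorrect. You argue that the restriction map of Lemma~\ref{lem:restriction} actually has image of rank at most~$1$, because a second global differential restricting nontrivially to $U_s$ would give a nonconstant linear function on the closed surface $\Sigma$, violating Proposition~\ref{prop:maximum-modulus}. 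But a section of $H^0(\Delta, \cD)$ is not a linear function: it only lifts to a global section of $\cA$ when its image under the connecting map $H^0(\Delta,\cD)\to H^1(\Delta,\RR)$ vanishes, and by your own dimension count it is precisely the classes with \emph{nonzero} image that populate $H^0(\cD)$. The torus is a counterexample to your claim: for a triangulated torus with all $\alpha=1$, $H^0(\cD)$ has rank~$2$ and maps injectively into $H^0(U_s,\cD)\cong\ZZ^2$ even though $H^0(\cA)$ is just constants. So the maximum modulus argument cannot sharpen Lemma~\ref{lem:restriction}.

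The paper resolves the non-orientable case differently: after discarding the ornaments and performing the combinatorial blow-ups, it passes to the $2$-to-$1$ orientable cover $\widetilde\Delta''$ of the manifold-with-fins $\Delta''$. This doubles the Euler characteristic, and since an orientable closed surface has even Euler characteristic, $\chi(\widetilde\Delta'') \le -2$, giving $\dim_\RR H^1(\widetilde\Delta'',\RR) = 2 - \chi \ge 4 > 3$, a contradiction with the bound you already derived. This cover argument is the genuinely missing idea in your proposal; once you have it, the rest of your argument goes through. A secondary remark: the paper discards ornaments \emph{before} blowing up (choosing $O$ maximal so that the manifold-with-fins part is locally connected through codimension~$1$), whereas you blow up first and discard ornaments afterward. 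Your ordering can be made to work but requires re-verifying local connectedness through codimension~$1$ after the blow-ups, which you gesture at but do not carry out.
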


\begin{proof}
Suppose that $\Delta$ is a
weak tropical surface whose underlying $\Delta$-complex is as in the theorem
statement.
We can assume that when decomposing $\Delta$ as in
Definition~\ref{def:manifold-fins-ornaments}, the subcomplex of
ornaments~$O$ is maximal, so that if we let $\Delta'$ denote the subcomplex
consisting of just
the manifold and fins, then $\Delta'$ is locally
connected through codimension~$1$. Then, taking the restriction of
the structure constants from~$\Delta$, we get that $\Delta'$ has the structure
of a weak tropical surface, because the local structure around each edge
of~$\Delta'$ is unchanged.
Moreover, at each vertex~$v$ of~$\Delta'$, the local intersection matrix~$M'_v$
is a block of the block diagonal matrix~$M_v$
for~$\Delta$. Therefore, $M'_v$ also has at most one positive eigenvalue. 

Next, we apply Lemma~\ref{lem:combinatorial-blow-up} to transform~$\Delta'$ into
a tropical surface $\Delta''$ by gluing
simplices onto edges of~$\Delta'$. Whenever we glue a simplex onto an edge~$e$
which is contained in one of the fins~$F_i \subset \Delta'$, we can include that
simplex in the fin, which remains contractible and its intersection with the
manifold~$\Sigma$ is unchanged. If we glue a simplex onto an edge~$e$ contained
in the manifold~$\Sigma$, then the simplex forms a new fin~$F_{n+1}$, numbered
after all the other fins. Since $F_{n+1} \cap \Sigma$ is a single edge, the
intersection of~$F_{n+1}$ with any other fin will be a subset of the endpoints
of this edge. Thus, $\Delta''$ is still a hyperbolic manifold with fins.

Finally, suppose that the manifold $\Sigma \subset \Delta''$ is not
orientable. Then $\Sigma$ has a $2$-to-$1$ orientable cover,
corresponding to an index~$2$ subgroup of its
fundamental group~\cite[Prop.~3.25]{hatcher}. Since $\Sigma
\subset \Delta''$ is a homotopy equivalence, the oriented cover extends to a
cover of~$\Delta''$, which we call $\widetilde \Delta''$. Each fin~$F_i$ of
$\Delta''$ is
attached along a path of~$\Sigma$, and so the preimage of~$F_i$ in $\widetilde
\Delta''$ is two disjoint fins, which we number $F_{2i-1}$ and $F_{2i}$ to get a
manifold with fins. Moreover, the Euler characteristic is multiplicative when
taking covers, so
$\chi(\widetilde \Delta'')$ is again negative. Therefore, we can replace $\Delta''$ with
$\widetilde \Delta''$ and so from now on we assume that the manifold $\Sigma
\subset \Delta''$ is orientable.

By the classification of compact topological
surfaces~\cite[Thm.~6.3]{gallier-xu}, the Euler characteristic of a compact
oriented surface is even, and so
$\chi(\Delta'') \leq -2$.
Therefore,
$\dim_{\RR} H^1(\Delta'', \RR) =
2 - \chi(\Delta'') \geq 4$.
By Theorem~\ref{thm:differentials}, the $\RR$-span of the image of
$H^0(\Delta'', \cD)$ has codimension at most 1 in $H^0(\Delta'', \RR)$, so the
rank of $H^0(\Delta'', \cD)$ as an Abelian group is at least $\dim H^1(\Delta'',
\RR) - 1 \geq 3$. On the other hand, by Lemma~\ref{lem:restriction},
$H^0(\Delta'', \cD)$ is a subgroup of $\ZZ^2$, and so a free Abelian group of
rank at most~$2$. Therefore, we have a contradiction, so the weak tropical
surface~$\Delta$ cannot exist.
\end{proof}

\begin{proof}[Proof of Theorem~\ref{thm:main-refined}]
Suppose $\X$ is a strict semistable degeneration whose dual complex~$\Delta$ is
a hyperbolic manifold with fins and ornaments. Then, by
Proposition~\ref{prop:degeneration}, $\Delta$ has the structure of a weak
tropical complex such that the matrix $M_v$ has at most one positive eigenvalue
for every vertex~$v$. However, by Theorem~\ref{thm:weak-tropical-complexes},
such a weak tropical complex cannot exist, so we conclude that $\X$ cannot
exist.
\end{proof}

\end{document}